\newtheorem{assumption}{Assumption} 
\newcommand\bb{\boldsymbol{b}}
\newcommand\bt{\boldsymbol{t}}
\newcommand\calT{\mathscr{T}}
\newcommand\calE{\mathscr{E}}
\newcommand\PP{\mathbb{P}}
\newcommand\rmU{\mathrm{U}}
\newcommand\RR{\mathbb{R}}
\numberwithin{equation}{section}
\date{}
\begin{document}
\title
{Edge-based nonlinear diffusion for finite element approximations of
  convection-diffusion equations and its relation to algebraic flux-correction schemes}
\titlerunning{Nonlinear diffusion and AFC}
\author{Gabriel R. Barrenechea \and Erik Burman \and Fotini Karakatsani}
\institute{G. R. Barrenechea \at  Department of Mathematics and Statistics, 
  University of Strathclyde  26 Richmond Street, Glasgow G1 1XH, United Kingdom, {\tt gabriel.barrenechea@strath.ac.uk}
\and E. Burman \at
              Department of Mathematics, University College London, London, UK-WC1E  6BT, United Kingdom , {\tt e.burman@ucl.ac.uk}
\and F. Karakatsani \at  Department of Mathematics, University of Chester, Thornton Science Park
CH2 4NU, Chester, United Kingdom. {\tt f.karakatsani@chester.ac.uk}
}

\date{Received: date / Accepted: date}

\authorrunning{G.~R.~Barrenechea, E.~Burman, F.~Karakatsani}
\numberwithin{equation}{section} 
\maketitle
\begin{abstract}
For the case of approximation of convection--diffusion equations using
piecewise affine continuous finite elements
a new edge-based nonlinear diffusion operator is proposed that makes
the scheme
satisfy a discrete maximum principle. The diffusion operator is shown
to be Lipschitz continuous and linearity preserving. Using these
properties we provide a full stability and  error analysis, which, in the diffusion dominated
regime, shows existence, uniqueness and optimal convergence. Then
the algebraic flux correction method is recalled and we show that the
present method can be interpreted as an algebraic flux correction
method for a particular definition of the flux limiters. The performance of the method is
illustrated on some numerical test cases in two space dimensions.
\end{abstract}

\keywords{convection--diffusion \and finite element \and discrete
  maximum principle \and nonlinear diffusion \and algebraic flux correction }

 \section{Introduction}

For an open bounded polygonal (polyhedral) domain $\Omega\subseteq\RR^d, d=2,3$, with Lipschitz
boundary, we consider in this work the  steady-state convection-diffusion-reaction equation
\begin{equation}\left\{
\begin{aligned} 
   -\varepsilon\,\Delta u+{\bb}\cdot\nabla u+\sigma\,u=f
&\quad\mbox{in $\Omega$}\, ,\\
    \qquad\qquad u=g &\quad\mbox{on $\partial\Omega$}\,,
 \end{aligned} \right .
 \label{strong-steady}  
\end{equation}
where $\varepsilon>0$ is the diffusion coefficient, 
${\bb}\in L^{\infty}(\Omega)^2$ is a solenoidal convective field, $\sigma>0$ is a real constant, and
$f\in L^2(\Omega)$,  $g\in H^{\frac{1}{2}}(\partial\Omega)$, are given data. In this work we adopt the
standard notation for Sobolev spaces. In particular, for $D\subset {\mathbb R}^d$ we
denote $(\cdot,\cdot)_D$ the $L^2(D)$ (or $L^2(D)^d$) inner product, and by $\|\cdot\|_{l,D}$
($|\cdot|_{l,D}$) the norm (seminorm) in $H^l(D)$ (with the usual convention that $H^0(D)=L^2(D)$).

The weak form of problem (\ref{strong-steady}) is: Find $u\in H^1(\Omega)$ such 
that $u=g$ on $\partial\Omega$ and
\begin{gather}\label{weakform-steady}
   a(u,v) = (f,v)_\Omega\qquad\forall\,v\in H^1_0(\Omega)\,,
\end{gather}
where the bilinear form $a$ is given by
\begin{gather*}
   a(u,v):=\varepsilon\,(\nabla u,\nabla v)_\Omega+({\bb}\cdot\nabla u,v)_\Omega+\sigma(u,v)_\Omega\,.
\end{gather*}

 The weak problem \eqref{weakform-steady} has 
a unique solution $u\in H^1(\Omega)$ and its solution satisfies the following maximum principle (see \cite{GT83}).
\begin{definition}[Maximum Principle]
Assume that  $f\geq 0, \;g\geq 0$ (resp. $\le 0$) and the solution $u$ of \eqref{weakform-steady} is smooth enough. Then, if
$\sigma=0$ and $u$ attains a strict minimum (resp. maximum) at an  interior point $\tilde{x}\in \Omega$, 
then $u$ is constant in $\Omega$. If $\sigma > 0$, then the same conclusion remains valid if we suppose 
in addition that $u(\tilde{x}) < 0$ (resp. $u(\tilde{x})>0$).
\end{definition}

This work deals with the development of a method that satisfies the discrete analogous of the last
definition. The quest for such a method has been a constant for the last couple of decades. Several
methods have been proposed over the years, both in the finite element and finite volume
contexts (see \cite{RST08} for a review). Overall, the common point of all discretisations that
satisfy a discrete maximum principle (DMP) is that they add some diffusion to the equations. This extra diffusion
can lead to a linear method, but in that case it is a well-known fact that a linear method will provide
very diffused numerical solutions, which will converge suboptimally. Due to the previous fact, 
several methods that add nonlinear diffusion have been proposed.

One approach has been to add a so-called shock-capturing term to the
finite element formulation. This typically amounts to a nonlinear
diffusion term where the diffusion coefficient depends nonlinearly on
the finite element residual, making it large in the zones where the
solution is underresolved, but vanish in smooth regions. An analysis
showing that nonlinear
shock capturing methods may lead to a DMP was first proposed in 
\cite{BE02}, and then developed further for the  Laplace operator  in \cite{BE04}, and
for the convection-diffusion equation in \cite{BE05}. For a review of shock capturing methods,
designed to reduce spurious oscillations, without necessarily
satisfying a DMP, see \cite{JK07}. More recent nonlinear discretisations, these
ones based on the idea of blending in order to satisfy the DMP, are the works 
\cite{EG13,BHSISC}, where the emphasis has been given to prove the convergence to an
entropy solution.
Most shock capturing techniques suffer from
the strong nonlinearity introduced when the diffusion coefficient is
made to depend on the finite element residual (and therefore the
gradient of the approximation function). Because of this the analysis 
of such methods is incomplete even when linear model problems with
constant coefficients are considered. In particular, in most cases uniqueness of
solutions can not be proved, and the convergence theory is incomplete.

On the other hand, driven initially by the design of explicit time
stepping schemes for compressible flows, so called Flux Corrected
Transport (FCT) schemes and the
related algebraic flux correction (AFC) schemes were introduced \cite{LMVB88,KT02,Kuzmin06}. 
These schemes 
act on the algebraic level by first modifying the system matrix so
that it has suitable properties to make the system monotonous, while
perturbing the method as little as possible. This crude strategy, however, necessarily
results in a first order scheme. Then a nonlinear switch,
or flux limiter, is introduced making the low order monotone scheme
active only in the zones where the DMP may be violated. These schemes
have also resisted mathematical analysis for a long time, but a number
of results have been proved recently in
\cite{BJK15,BJK15b}. Indeed, in these references, existence of solutions and positivity have been proved, and 
a first error analysis has been performed. Nevertheless, it was shown that the DMP, and even the
convergence of the discrete solution to the continuous one, depend on the geometry of the mesh.

Another approach to combine monotone (low order) finite element methods with
linear diffusion and high order FEM using flux-limiters was proposed very recently in \cite{GNPY14SINUM}. It then
appears that a cross pollination between the idea of AFC and shock-capturing could be fruitful.

The objective of the present paper is to further bridge the gap between the
shock capturing approach and the algebraic flux correction. Indeed we will consider a generalisation of the shock-capturing
term first introduced in \cite{Bu07} to several dimensions, using an
anisotropic diffusion operator along element edges similar to that
introduced in \cite{BE05}. We show that the resulting scheme satisfies the DMP and give an
analysis of the method. In particular we show that the new
shock capturing term is Lipschitz continuous, and, if the mesh is sufficiently regular,  linearity
preserving (see \S~ \ref{properties-dh}), which allows us to
improve greatly on previous results. In \S~\ref{S-existence} we prove existence of solutions, the discrete maximum
principle, and noticeably, uniqueness in the diffusion dominated regime. We then show error estimates, which,
thanks to the combined use of linearity preservation and Lipschitz continuity,
turn out to be optimal  in the diffusion dominated regime, 
for a special  class of meshes (see \S~\ref{error-estimates}).
In \S~\ref{link-AFC}, we revisit the design principles of
AFC and show that the proposed shock-capturing term can be interpreted
as an AFC scheme using a special flux, allowing both for a DMP and
Lipschitz continuity. Some numerical results are finally shown in \S~\ref{numerics}.

\subsection{Notations}

We now introduce some notation that will be needed for the discrete setting.
We consider a family  $\{\calT_h\}_{h>0}$ of shape-regular triangulations of $\Omega$ consisting of
disjoint $d$-simplices $K$. We define $h_K:={\rm diam} (K)$, and $h=\max\{ h_K:K\in\calT_h\}$.
We associate with the triangulation $\calT_h$ the finite element spaces  
\begin{equation}
\mathcal{V}_h:=\{\chi \in H^1({\Omega}):  \chi|_K\in \PP_1(K)\, \forall K\in\calT_h \} ,
 \quad\text{and} \quad \mathcal{V}_h^0:= \mathcal{V}_h \cap H_0^1(\Omega),
\end{equation}
where $\PP_\ell(D)$ is the space of  polynomials of degree at most $\ell$ on $D.$ The nodes
of $\calT_h$ are denoted by $\{x_i\}_{i=1}^N$, and the usual associated basis functions of
$\mathcal{V}_h$ are denoted by $\{\psi_i\}_{i=1}^N$. 

We let $\calE_h$  be the set of the interior edges of $\calT_h$. For every edge
$E\in\calE_h$,  we define $h_E:=|E|$ and $\omega_E:=\{ K\in\calT_h:K\cap E\not=\emptyset\}$, and
 fix one unit tangent vector, denoted by $\bt$.

For an interior node $x_i$, we define the neighborhoods $\calE_i:= \{E \in \calE_h: x_i\in E\}$,  
$\Omega_i:=\{K\in \calT_h:  x_i\in K \}$, and the set
\begin{equation}
S_i:=\{ j\in \{1,\ldots,N\}\setminus\{ i\}\;:\;x_j\;\textrm{shares an internal edge with}\; x_i\}\,.
\end{equation}

Finally, we will say that the triangulation $\calT_h$ is {\it symmetric with respect to its internal nodes}
if for every internal node $x_i$ the following holds: for all $j\in S_i$ there exists $k\in S_i$ such that
$x_j-x_i=-(x_k-x_i)$ (see Figure \ref{fig-mesh} for examples in two space dimensions).

\begin{figure}[h!tb]
\begin{center}
\subfigure[]{\includegraphics[width=0.25\textwidth]{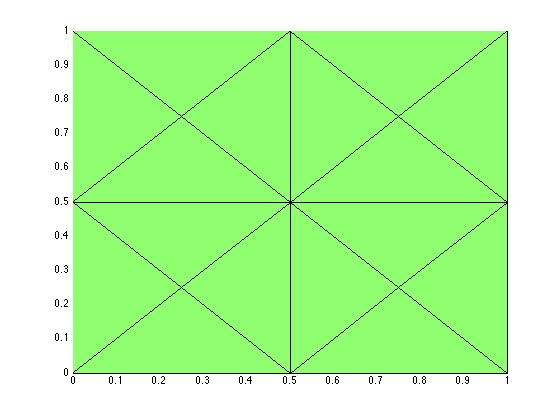}}\hspace*{1ex}
\subfigure[]{\includegraphics[width=0.25\textwidth]{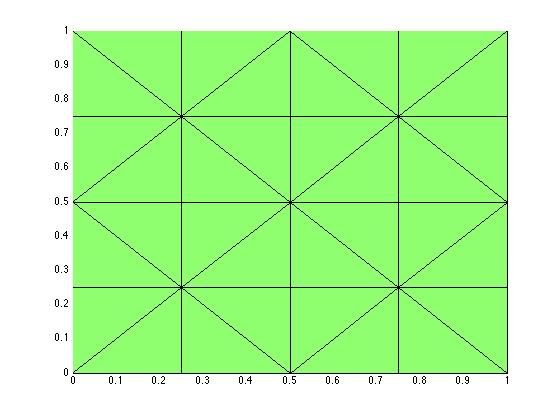}}\hspace*{1ex}
\subfigure[]{\includegraphics[width=0.25\textwidth]{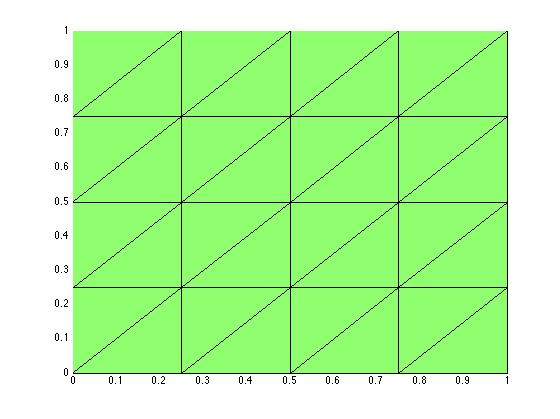}}\hspace*{1ex}
\subfigure[]{\includegraphics[width=0.25\textwidth]{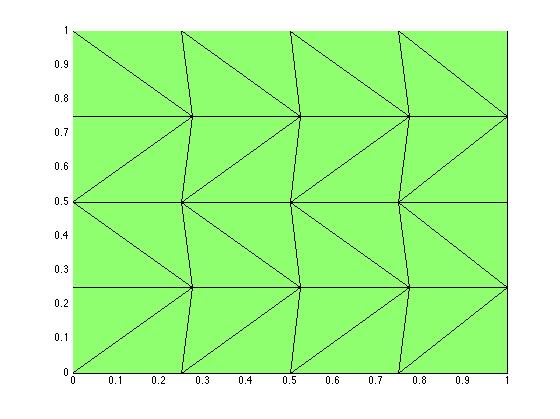}}\hspace*{1ex}
\end{center}
\caption{In two dimensions, meshes (a)-(c) are examples of symmetric  meshes. Mesh (d) is a non-symmetric, non-Delaunay mesh.}\label{fig-mesh}
\end{figure}

\section{The nonlinear discretisation}
The standard finite element method for the problem
\eqref{weakform-steady} takes the form: 
 Find $u_h\in \mathcal{V}_h$ such that $u_h-u_{bh} \in \mathcal{V}_h^0$ and
\begin{equation}\label{FEM}
 a(u_h,v_h) = (f,v_h)_\Omega\qquad
   \forall\,\,v_h\in \mathcal{V}_h^0\,.
\end{equation}
Here,  $u_{bh}\in\mathcal{V}_h$ is introduced to approximate the boundary condition $g$.
Then, we propose  the following stabilised method to discretise \eqref{weakform-steady}: Find 
 $u_h\in \mathcal{V}_h$ such that $u_h-u_{bh} \in \mathcal{V}_h^0$ and
\begin{equation}\label{LSC}
  \tilde{a}(u_h;v_h):= a(u_h,v_h)+d_h(u_h; u_h, v_h) = (f,v_h)_\Omega\qquad
   \forall\,\,v_h\in \mathcal{V}_h^0\,.
\end{equation}
The stabilisation
term $d_h(\cdot \; ; \cdot, \; \cdot)$ is defined by
\begin{equation}
   d_h(w_h ; u_h, v_h)
             =\sum_{E\in \calE_h}\,  \gamma_0^{}\,  h_E^d \, \alpha_E^{}(w_h)( \partial_{\bt}u_h, \partial_{\bt}v_h)_E\,, \end{equation} 
where $\gamma_0^{}>0$, and $\alpha_E^{}:\mathcal{V}_h\to [0,1]$ is a  constant function 
on each $E$ defined  by 
\begin{equation}
\alpha_E^{}(w_h):= \max_{x\in E} \big[ \xi_{w_h}(x)\big]^p\quad,\quad p\in \; [1, +\infty)\,,
\end{equation}
with $\xi_{w_h} \in \mathcal{V}_h$ given by
\begin{equation}
\xi_{w_h} (x_i) := \left\{
\begin{aligned} 
 \frac{\left|\sum_{j\in S_i} w_h(x_i)-w_h(x_j)\right|}{\sum_{j\in S_i}|w_h(x_i)-w_h(x_j)|},&\quad\mbox{if}\;\;
\sum_{j\in S_i} |w_h(x_i)-w_h(x_j)| \neq 0\, ,\\
    \qquad\qquad 0, &\quad\mbox{otherwise}\,.
 \end{aligned} \right .
\end{equation}
The value for $p$ will determine the rate of decay of the numerical
diffusion with the distance to the critical points. A value closer to
$1$ will add more diffusion in the far field, while a larger value
will make the diffusion vanish faster,
but on the other hand, increasing $p$ may make the nonlinear system
more difficult to solve. In principle, as $p$ goes to infinity the
method will add the perturbations only in points with local extrema. In our calculations
we have tested several different values for $p$, and have presented those for $p=1$ and $p=4$, the
latter giving better numerical results, while keeping the nonlinear solver converging within a reasonable
number of iterations.

\begin{remark} All the proofs presented in this work are valid in both the two and three-dimensional
cases. Just to simplify the presentation we will restrict ourselves from now on to the two-dimensional case $d=2$.
\end{remark}

\subsection{Properties of $d_h(\cdot;\cdot,\cdot)$}\label{properties-dh}
We start noticing that 
\[ \sum_{j\in S_i} |w_h(x_i) - w_h(x_j)| = 0\Longrightarrow w_h|_{\Omega_i}=c\in\mathbb{R}\,.\]
 This prevents the method from adding artificial diffusion to the equations in regions in which the solution
is constant. Moreover, the method is as well linearity preserving if the mesh is symmetric with respect
 to its interior nodes. In fact, if  $E\in\calE_h$ has
endpoints $x_i$ and $x_j$,  and $v_h\in\PP_1(\omega_E)$, then
\begin{equation}
\sum_{l\in S_i} v_h(x_i)-v_h(x_l)=0\quad\textrm{and}\quad
\sum_{l\in S_j} v_h(x_j)-v_h(x_l)=0\,,
\end{equation}
which gives $\alpha_E^{}(v_h)=0$. Then, the method does not add extra diffusion in smooth regions,
whenever the mesh is sufficiently structured.

The next step is to show that $d_h(\cdot;\cdot,\cdot)$ is continuous. More precisely,
it is Lipschitz continuous, and the next result is the first step towards this.

\begin{lemma}\label{Lem-Lip-1}
For  any $v_h, w_h\in \mathcal{V}_h$, and any given internal node $x_i$, the following holds
\begin{equation}
|\xi_{v_h}(x_i)-\xi_{w_h}(x_i)|\le 4\frac{\sum_{E\in\calE_i}h_E|\partial_{\bt}(v_h-w_h)|}{
\sum_{E\in\calE_i}h_E\big(|\partial_{\bt} v_h|+|\partial_{\bt} w_h|\big)}\,.
\end{equation}
\end{lemma}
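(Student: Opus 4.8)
The plan is to reduce everything to edge quantities and then estimate a difference of ratios of the form $N/D$ with $0\le N\le D$. First I would record the elementary but crucial identity that, since $w_h$ is affine along each edge $E\in\calE_i$ with endpoints $x_i,x_j$ and unit tangent $\bt$, one has $|w_h(x_i)-w_h(x_j)|=h_E|\partial_{\bt}w_h|$, and likewise $|(v_h-w_h)(x_i)-(v_h-w_h)(x_j)|=h_E|\partial_{\bt}(v_h-w_h)|$. Through the bijection between neighbours $j\in S_i$ and edges $E\in\calE_i$ this identifies the nodal sums with the edge sums in the statement. Writing $s_j:=w_h(x_i)-w_h(x_j)$ and $t_j:=v_h(x_i)-v_h(x_j)$, I set $D_w:=\sum_{j\in S_i}|s_j|=\sum_{E\in\calE_i}h_E|\partial_{\bt}w_h|$ and $N_w:=|\sum_{j\in S_i}s_j|$, and analogously $D_v,N_v$; the denominator in the claimed bound is then exactly $D_v+D_w$, and the numerator is $\Delta:=\sum_{j\in S_i}|t_j-s_j|=\sum_{E\in\calE_i}h_E|\partial_{\bt}(v_h-w_h)|$. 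With this notation $\xi_{w_h}(x_i)=N_w/D_w$ (and $0$ when $D_w=0$), and clearly $N_w\le D_w$, so that $\xi\in[0,1]$.

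The two estimates that do the work are $|N_v-N_w|\le\Delta$ and $|D_v-D_w|\le\Delta$. The first follows from the reverse triangle inequality followed by the ordinary one, $|N_v-N_w|=\big||\sum_j t_j|-|\sum_j s_j|\big|\le|\sum_j(t_j-s_j)|\le\sum_j|t_j-s_j|=\Delta$; the second uses $||t_j|-|s_j||\le|t_j-s_j|$ termwise, giving $|D_v-D_w|\le\sum_j||t_j|-|s_j||\le\Delta$.

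Assuming first that $D_v,D_w>0$, I would write the difference of ratios as $\frac{N_v}{D_v}-\frac{N_w}{D_w}=\frac{N_v-N_w}{D_v}+\frac{N_w}{D_w}\cdot\frac{D_w-D_v}{D_v}$, take absolute values, and use $N_w/D_w\le 1$ together with the two estimates above to obtain $|\xi_{v_h}(x_i)-\xi_{w_h}(x_i)|\le\frac{|N_v-N_w|+|D_v-D_w|}{D_v}\le\frac{2\Delta}{D_v}$. By the symmetric decomposition the same bound holds with $D_w$ in place of $D_v$, whence the left-hand side is at most $\frac{2\Delta}{\max(D_v,D_w)}\le\frac{4\Delta}{D_v+D_w}$, which is the claim. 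Finally I would dispose of the degenerate cases forced by the piecewise definition of $\xi$: when $D_v+D_w=0$ both $\xi$-values vanish by convention and one simply assumes $D_v+D_w>0$; when exactly one denominator vanishes, say $D_w=0$, then $s_j=0$ for all $j\in S_i$, so $\Delta=\sum_j|t_j|=D_v$, the right-hand side equals $4$, while the left-hand side equals $N_v/D_v\le 1$.

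The computations are entirely elementary, so there is no serious obstacle; the only points needing a little care are the sign bookkeeping in passing from nodal differences to tangential derivatives (which is harmless, since only absolute values enter), and the case analysis when a denominator vanishes, where one must verify that the constant $4$ still accommodates the estimate.
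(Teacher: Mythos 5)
Your proof is correct and follows essentially the same route as the paper's: the same add-and-subtract decomposition of the difference of ratios into a numerator-difference term and a denominator-difference term, the same bounds $|N_v-N_w|\le\Delta$, $|D_v-D_w|\le\Delta$ with $N_w\le D_w$, the symmetric bound $2\Delta/\max(D_v,D_w)$, and the final passage to $4\Delta/(D_v+D_w)$. The only difference is cosmetic: you spell out the degenerate cases (one or both denominators vanishing) that the paper dismisses as ``obvious,'' which is a welcome bit of extra care but not a different argument.
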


\begin{proof} It is enough to suppose that $\sum_{j\in S_i}|v_h(x_i)-v_h(x_j)|>0$ and 
$\sum_{j\in S_i}|w_h(x_i)-w_h(x_j)|>0$, otherwise the claim is obvious.  A quick calculation gives
\begin{align*}
&|\xi_{v_h}(x_i)-\xi_{w_h}(x_i)|= \left|\frac{\left|\sum_{j\in S_i}v_h(x_i)-v_h(x_j)\right|}{\sum_{E\in\calE_i}h_E
|\partial_{\bt}v_h|}
- \frac{\left|\sum_{j\in S_i}w_h(x_i)-w_h(x_j)\right|}{\sum_{E\in\calE_i}h_E|\partial_{\bt}w_h|}\right|\\
&\le \left|\frac{\left|\sum_{j\in S_i}v_h(x_i)-v_h(x_j)\right|-\left|\sum_{j\in S_i} w_h(x_i)-w_h(x_j)\right|}{\sum_{E\in\calE_i} h_E|\partial_{\bt}v_h|}\right|\\
&\quad+
\left|\sum_{j\in S_i}w_h(x_i)-w_h(x_j)\right|\left|
\frac{1}{\sum_{E\in\calE_i}h_E|\partial_{\bt}v_h|}-\frac{1}{\sum_{E\in\calE_i}h_E|\partial_{\bt}w_h|}\right|\\
&\le  \frac{\sum_{E\in\calE_i}h_E|\partial_{\bt}(v_h-w_h)|}{\sum_{E\in\calE_i}h_E|\partial_{\bt}v_h|}+
\frac{\left|\sum_{j\in S_i}w_h(x_i)-w_h(x_j)\right|\,\left|\sum_{E\in\calE_i}h_E\left(|\partial_{\bt}w_h|-|\partial_{\bt}v_h|\right)\right|}
{\sum_{E\in\calE_i}h_E|\partial_{\bt}v_h|\sum_{E\in\calE_i}h_E|\partial_{\bt}w_h|}\\
&\le 2\,\frac{\sum_{E\in\calE_i}h_E|\partial_{\bt}(v_h-w_h)|}{\sum_{E\in\calE_i}h_E|\partial_{\bt}v_h|}\,.
\end{align*}
The following estimate can be proved in an analogous way
\begin{equation*}
|\xi_{v_h}(x_i)-\xi_{w_h}(x_i)|\le 2\,\frac{\sum_{E\in\calE_i}h_E|\partial_{\bt}(v_h-w_h)|}{\sum_{E\in\calE_i}h_E|\partial_{\bt}w_h|}\,.
\end{equation*}
Then,
\begin{equation}
|\xi_{v_h}(x_i)-\xi_{w_h}(x_i)|\le 2\min\left\{
\frac{1}{\sum_{E\in\calE_i}h_E|\partial_{\bt}v_h|}, \frac{1}{\sum_{E\in\calE_i}h_E|\partial_{\bt}w_h|}\right\}\,
\sum_{E\in\calE_i}h_E|\partial_{\bt}(v_h-w_h)|\,,
\end{equation}
which gives the desired result upon applying the estimate $\min\{a^{-1},b^{-1}\} \le \frac{2}{a+b}$,
for two positive numbers $a$ and $b$. \qed
\end{proof}

The Lipschitz continuity of $d_h(\cdot;\cdot,\cdot)$ appears then as a consequence of the previous result.

\begin{lemma}\label{Lemma-Lip-2} The nonlinear form $d_h(\cdot;\cdot,\cdot)$ is Lipschitz continuous. More precisely,
there exists $C_{\rm lip}>0$, independent of $h$, such that, for all $v_h,w_h,z_h\in \mathcal{V}_h$, the following holds
\begin{equation}
|d_h(v_h;v_h,z_h)-d_h(w_h;w_h,z_h)|\le C_{\rm lip}\gamma_0^{}h\,|v_h-w_h|_{1,\Omega}\,|z_h|_{1,\Omega}\,.
\end{equation}
\end{lemma}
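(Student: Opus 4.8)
The plan is to estimate the difference through the standard ``add and subtract'' device: since the quantity being differenced is bilinear in its last two arguments but carries the nonlinear weight $\alpha_E^{}$ in its first slot, I would write, on each edge $E$,
\[
\alpha_E^{}(v_h)(\partial_{\bt}v_h,\partial_{\bt}z_h)_E-\alpha_E^{}(w_h)(\partial_{\bt}w_h,\partial_{\bt}z_h)_E
=\alpha_E^{}(v_h)(\partial_{\bt}(v_h-w_h),\partial_{\bt}z_h)_E+\big(\alpha_E^{}(v_h)-\alpha_E^{}(w_h)\big)(\partial_{\bt}w_h,\partial_{\bt}z_h)_E,
\]
so that $d_h(v_h;v_h,z_h)-d_h(w_h;w_h,z_h)=\mathrm{(I)}+\mathrm{(II)}$ with the obvious identification. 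Throughout I would use that for $u\in\mathcal{V}_h$ the tangential derivative $\partial_{\bt}u$ is constant along each edge, hence $(\partial_{\bt}u,\partial_{\bt}v)_E=h_E\,\partial_{\bt}u\,\partial_{\bt}v$, and the scaling bound $h_E^2|\partial_{\bt}u|_E^2\le C\|\nabla u\|_{0,K}^2$ (valid in $d=2$, with $K$ a simplex containing $E$), which after summation over $E\in\calE_h$ and invoking shape-regularity (bounded number of edges per element) yields $\sum_{E}h_E^2|\partial_{\bt}u|_E^2\le C|u|_{1,\Omega}^2$.

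The term $\mathrm{(I)}$ is the easy one. Since $\alpha_E^{}(v_h)\in[0,1]$, I would bound $|\mathrm{(I)}|\le\gamma_0^{}\sum_E h_E^3|\partial_{\bt}(v_h-w_h)|_E|\partial_{\bt}z_h|_E$, extract one factor $h_E\le h$, and close with the Cauchy--Schwarz inequality over the edges together with the scaling bound above, obtaining $|\mathrm{(I)}|\le C\gamma_0^{}h\,|v_h-w_h|_{1,\Omega}|z_h|_{1,\Omega}$.

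The hard part is $\mathrm{(II)}$, and this is where Lemma~\ref{Lem-Lip-1} enters. First I would control the variation of the weight: using $|a^p-b^p|\le p|a-b|$ for $a,b\in[0,1]$ and the contraction property $|\max f-\max g|\le\max|f-g|$, one gets $|\alpha_E^{}(v_h)-\alpha_E^{}(w_h)|\le p\max_{x\in E}|\xi_{v_h}(x)-\xi_{w_h}(x)|$; since $\xi_{v_h}-\xi_{w_h}$ is affine along $E$ the maximum is attained at an endpoint, say $x_i$, so the bound of Lemma~\ref{Lem-Lip-1} applies at $x_i$. The decisive observation is that the factor $h_E|\partial_{\bt}w_h|_E$ produced by the inner product $(\partial_{\bt}w_h,\partial_{\bt}z_h)_E$ is exactly one of the summands in the denominator appearing in Lemma~\ref{Lem-Lip-1} (because $E\in\calE_i$), whence
\[
h_E|\partial_{\bt}w_h|_E\,|\xi_{v_h}(x_i)-\xi_{w_h}(x_i)|
\le 4\,\frac{h_E|\partial_{\bt}w_h|_E}{\sum_{E'\in\calE_i}h_{E'}(|\partial_{\bt}v_h|+|\partial_{\bt}w_h|)}\,\sum_{E'\in\calE_i}h_{E'}|\partial_{\bt}(v_h-w_h)|
\le 4\sum_{E'\in\calE_i}h_{E'}|\partial_{\bt}(v_h-w_h)|_{E'}.
\]
This absorption is what makes the estimate uniform in $h$ despite the ratio defining $\xi$: the potentially small denominator is cancelled rather than inverted.

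It then remains to assemble. Combining the last display with $|(\partial_{\bt}w_h,\partial_{\bt}z_h)_E|=h_E|\partial_{\bt}w_h|_E|\partial_{\bt}z_h|_E$ gives, for each edge, a bound of the form $\gamma_0^{}h_E^2|\partial_{\bt}z_h|_E\cdot 4p\sum_{E'\in\calE_{i}}h_{E'}|\partial_{\bt}(v_h-w_h)|_{E'}$. Summing over $E\in\calE_h$ produces a double sum over the pairs $(E,E')$ with $E'\in\calE_{i(E)}$; I would extract a factor $h$ (using $h_E\le h$), apply Cauchy--Schwarz to this double sum, and then use the bounded valence of the mesh (finite overlap: each edge lies in boundedly many patches $\calE_i$, and $h_{E'}\approx h_E$ within a patch by shape-regularity) to collapse the inner sums into the single sums $\sum_E h_E^2|\partial_{\bt}z_h|_E^2$ and $\sum_{E'}h_{E'}^2|\partial_{\bt}(v_h-w_h)|_{E'}^2$. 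The scaling bound from the first paragraph finally turns these into $|z_h|_{1,\Omega}^2$ and $|v_h-w_h|_{1,\Omega}^2$, so that $|\mathrm{(II)}|\le C p\,\gamma_0^{}h\,|v_h-w_h|_{1,\Omega}|z_h|_{1,\Omega}$. Adding the two contributions gives the claim, with $C_{\rm lip}$ depending only on $p$ and the shape-regularity constants. I expect the only genuinely delicate points to be the absorption step for $\mathrm{(II)}$ and the bookkeeping of the finite-overlap argument when passing from the double sum to the $H^1$-seminorms.
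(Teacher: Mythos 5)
Your proposal is correct, and it reproduces the paper's overall architecture: the same splitting of the difference into the two terms $\mathrm{(I)}$ and $\mathrm{(II)}$, the same bound for $\mathrm{(I)}$ via $\alpha_E^{}\in[0,1]$, Cauchy--Schwarz and scaling, and the same two key ingredients for $\mathrm{(II)}$, namely Lemma~\ref{Lem-Lip-1} and the absorption of the factor $h_E|\partial_{\bt}w_h|_E$ into the denominator of that lemma (which is indeed the step that prevents the small denominator from being inverted). Where you genuinely depart from the paper is in how the variation of the edge weight is reduced to a nodal estimate. The paper must deal with the possibility that the maxima defining $\alpha_E^{}(v_h)$ and $\alpha_E^{}(w_h)$ are attained at \emph{different} endpoints of $E$: it partitions $\calE_h$ into the sets $E_1$ (same endpoint) and $E_2$ (different endpoints), and for $E\in E_2$ it adds and subtracts $\xi^p_{w_h}(x_i)$ (respectively $\xi^p_{v_h}(x_j)$) and then kills the leftover term by a sign argument --- by construction $\xi^p_{v_h}(x_i)-\xi^p_{v_h}(x_j)\ge 0$ and $\xi^p_{w_h}(x_i)-\xi^p_{w_h}(x_j)\le 0$, so the minimum of the two possible leftover products is non-positive. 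You bypass this case analysis entirely by the contraction property $|\max f-\max g|\le\max|f-g|$ combined with $|a^p-b^p|\le p|a-b|$ and the observation that $\xi_{v_h}-\xi_{w_h}$ is affine along $E$, so its absolute value peaks at an endpoint; this yields $|\alpha_E^{}(v_h)-\alpha_E^{}(w_h)|\le p\,|\xi_{v_h}(x_i)-\xi_{w_h}(x_i)|$ at a single node $x_i$ regardless of where the two maxima sit, after which Lemma~\ref{Lem-Lip-1} and the absorption proceed exactly as in the paper. Your route buys a real simplification --- it eliminates the paper's most delicate step (the $E_1/E_2$ split and the sign/minimum argument) at no cost, producing the same constant behaviour ($C_{\rm lip}$ linear in $p$, depending only on shape regularity); the final double-sum bookkeeping with finite overlap is also sound and is implicit in the paper's own passage from \eqref{Lip-3} to the $H^1$-seminorms.
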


\begin{proof} We have
\begin{align}
& d_h(v_h;v_h,z_h)-d_h(w_h;w_h,z_h)=\sum_{E\in\calE_h}\gamma_0^{}h_E^2(\alpha_E^{}(v_h)
\partial_{\bt}v_h-\alpha_E^{}(w_h)\partial_{\bt}w_h, \partial_{\bt} z_h)_E \nonumber\\
&=\sum_{E\in\calE_h}\gamma_0^{}h_E^2\alpha_E^{}(v_h)(\partial_{\bt}v_h-\partial_{\bt}w_h,\partial_{\bt}z_h)_E 
+ \gamma_0^{}h_E^2(\alpha_E^{}(v_h)-\alpha_E^{}(w_h))(\partial_{\bt}w_h, \partial_{\bt} z_h)_E\,. \label{Lip-1}
\end{align}
The first term in the above estimate is bounded using the fact that $|\alpha_E^{}(v_h)|\le 1$, the
Cauchy-Schwarz inequality, a local trace inequality, and the mesh regularity, to give
\begin{equation}\label{Lip-2}
\sum_{E\in\calE_h}\gamma_0^{}h_E^2\alpha_E^{}(v_h)(\partial_{\bt}v_h-\partial_{\bt}w_h,\partial_{\bt}z_h)_E\le
C\gamma_0^{}h\,|v_h-w_h|_{1,\Omega}|z_h|_{1,\Omega}\,.
\end{equation}
The second term is bounded next. For this, a general edge $E\in\calE_h$ will be considered as having
$x_i$ and $x_j$ as endpoints, where $x_i$
  is chosen to be the vertex such that $\alpha_E(v_h)=
  \xi^p_{v_h}(x_i)$. We then divide $\calE_h=E_1\cup E_2$, where 
\begin{align*}
E_1:=\{ E\in\calE_h: \alpha_E^{}(v_h) =\xi_{v_h}^p(x_i), \alpha_E^{}(w_h)=\xi_{w_h}^p(x_i)\}\,,\\
E_2:=\{ E\in\calE_h: \alpha_E^{}(v_h) =\xi_{v_h}^p(x_i), \alpha_E^{}(w_h)=\xi_{w_h}^p(x_j)\}\,,
\end{align*}
and the second term in \eqref{Lip-1} reduces to
\begin{equation*}
\sum_{E\in E_1}\gamma_0^{}h_E^2\big((\xi_{v_h}^p(x_i)-\xi_{w_h}^p(x_i))\partial_{\bt}w_h, \partial_{\bt} z_h\big)_E
+\sum_{E\in E_2} \gamma_0^{}h_E^2\big((\xi_{v_h}^p(x_i)-\xi_{w_h}^p(x_j))\partial_{\bt}w_h, \partial_{\bt} z_h\big)_E\,.
\end{equation*}
We now remark that for two numbers $a,b\in [0,1]$ we have
\[
|a^p-b^p| = |a-b|\sum_{l=0}^{p-1}a^lb^{p-1-l}\le p\,|a-b|\,,
\]
and the term in $E_1$ is bounded using Lemma \ref{Lem-Lip-1}. In fact, from the mesh regularity  
there exists $C>0$, independent of $h$, such that for all $E,F\in \calE_i, h_F\le C h_E$.
Moreover, the number of edges in $\calE_i$ is uniformly bounded, independently of $h$. Then,
using Cauchy-Schwarz's inequality and a local trace inequality we arrive at
\begin{align}
&\sum_{E\in E_1}\gamma_0^{}h_E^2\big((\xi_{v_h}^p(x_i)-\xi_{w_h}^p(x_i))\partial_{\bt}w_h, \partial_{\bt} z_h\big)_E \nonumber\\
&\le p\sum_{E\in E_1}\gamma_0^{}h_E^2\big(|\xi_{v_h}(x_i)-\xi_{w_h}(x_i)|\partial_{\bt}w_h, \partial_{\bt} z_h\big)_E \nonumber\\
&\le p\sum_{E\in E_1}\gamma_0^{}h_E^2\left(4\frac{\sum_{F\in\calE_i}h_F|\partial_{\bt}(v_h-w_h)|_{F}|}{\sum_{F\in\calE_i}
h_F(|\partial_{\bt} v_h|_F|+|\partial_{\bt} w_h|_F|)}|\partial_{\bt}w_h|,|\partial_{\bt}z_h|\right)_E  \nonumber\\
& \le 4p\,\gamma_0^{}\sum_{E\in E_1}h_E^2\left( \sum_{F\in\calE_i}\big|\partial_{\bt}(v_h-w_h)|_F^{}\big|,|\partial_{\bt}z_h|\right)_E  \nonumber\\
&\le C\gamma_0^{}h\,|v_h-w_h|_{1,\Omega}|z_h|_{1,\Omega}\,. \label{Lip-3}
\end{align}
The sum over $E_2$ is bounded next. First, using \eqref{Lip-3} we get
\begin{align*}
&\sum_{E\in E_2}
\gamma_0^{}h_E^2\big((\xi_{v_h}^p(x_i)-\xi_{w_h}^p(x_j))\partial_{\bt}w_h, \partial_{\bt}
z_h\big)_E \\
&= \sum_{E\in E_2} \gamma_0^{}h_E^2\big((\xi_{v_h}^p(x_i)-\xi_{w_h}^p(x_i))\partial_{\bt}w_h, \partial_{\bt} z_h\big)_E\\
&+
\sum_{E\in E_2} \gamma_0^{}h_E^2\big((\xi_{w_h}^p(x_i)-\xi_{w_h}^p(x_j))\partial_{\bt}w_h, 
\partial_{\bt} z_h\big)_E\\
&\le C\gamma_0^{}h\,|v_h-w_h|_{1,\Omega}|z_h|_{1,\Omega}+\sum_{E\in E_2} \gamma_0^{}h_E^2
\big((\xi_{w_h}^p(x_i)-\xi_{w_h}^p(x_j))\partial_{\bt}w_h, \partial_{\bt} z_h\big)_E\,.
\end{align*}
In an analogous way we obtain
\begin{align*}
&\sum_{E\in E_2} \gamma_0^{}h_E^2\big((\xi_{v_h}^p(x_i)-\xi_{w_h}^p(x_j))\partial_{\bt}w_h, \partial_{\bt} z_h\big)_E
\le C\gamma_0^{}\,h\,|v_h-w_h|_{1,\Omega}|z_h|_{1,\Omega}\\&\quad +\sum_{E\in E_2} \gamma_0^{}h_E^2\big((\xi_{v_h}^p(x_i)-\xi_{v_h}^p(x_j))\partial_{\bt}w_h, \partial_{\bt} z_h\big)_E\,.
\end{align*}
Hence
\begin{align}
&\sum_{E\in E_2} \gamma_0^{}h_E^2\big((\xi_{v_h}^p(x_i)-\xi_{w_h}^p(x_j))\partial_{\bt}w_h, \partial_{\bt} z_h\big)_E\le
C\gamma_0^{}h\,|v_h-w_h|_{1,\Omega}|z_h|_{1,\Omega} \,+ \nonumber\\
&\sum_{E\in E_2}\gamma_0^{}h_E^2\min\{ (\xi_{v_h}^p(x_i)-\xi_{v_h}^p(x_j))\big(\partial_{\bt}w_h, \partial_{\bt} z_h\big)_E,
(\xi_{w_h}^p(x_i)-\xi_{w_h}^p(x_j))\big(\partial_{\bt}w_h, \partial_{\bt} z_h\big)_E\} \nonumber\\
&\le C\gamma_0^{}h\,|v_h-w_h|_{1,\Omega}|z_h|_{1,\Omega}\,, \label{Lip-4}
\end{align}
since the last term in the middle inequality is always non-positive, since by construction, for $E \in E_2$, $\xi_{v_h}^p(x_i) -
\xi_{v_h}^p(x_j)\ge 0$ and $\xi_{w_h}^p(x_i) -
\xi_{w_h}^p(x_j)\le 0$. The result then follows collecting \eqref{Lip-1}-\eqref{Lip-4}. \qed
\end{proof}

\subsection{Solvability of the discrete problem}\label{S-existence}
This section is devoted to analyse the existence of solutions for \eqref{LSC}.  It is interesting to remark that, 
thanks to the Lipschitz continuity of $d_h(\cdot;\cdot,\cdot)$, the solution can be proved to be unique in the diffusion-dominated regime.

\begin{lemma}\label{exis-1}
Let   $T_h: \mathcal{V}_h^0 \rightarrow \mathcal{V}_h^0$ be the operator defined by
\begin{equation}
[T_h z_h, v_h] = \,\tilde{a}(z_h + u_{bh} ; v_h)  - (f,v_h)_\Omega, \quad z_h, v_h \in \mathcal{V}_h^0\,.           
\end{equation}
Then,
 \begin{equation}
[T_h z_h, z_h] \ge c_1 |z_h|_{1,\Omega}^2  -  c_2 (\|u_{bh}\|_{1,{\Omega}}^2 + \|f\|_{0,{\Omega}}^2),
\label{T_h_property}
\end{equation} 
where $c_1, c_2$ are positive constants independent of $z_h, f$,  and $g$.
\end{lemma}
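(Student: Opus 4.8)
The plan is to test the defining identity of $T_h$ with $v_h=z_h$ and to extract the coercivity already contained in $\tilde a$, treating everything that involves the data $u_{bh}$ and $f$ as a perturbation to be absorbed. Setting $w_h:=z_h+u_{bh}$ and using that $a(\cdot,\cdot)$ is linear in its first argument while $d_h(w_h;\cdot,\cdot)$ is linear in its middle argument, I would first write
\[
[T_h z_h,z_h]=a(z_h,z_h)+d_h(w_h;z_h,z_h)+a(u_{bh},z_h)+d_h(w_h;u_{bh},z_h)-(f,z_h)_\Omega.
\]
The first two terms are the favourable ones. Since $\bb$ is solenoidal and $z_h\in\mathcal{V}_h^0\subset H^1_0(\Omega)$, integration by parts yields $(\bb\cdot\nabla z_h,z_h)_\Omega=0$, so that $a(z_h,z_h)=\varepsilon|z_h|_{1,\Omega}^2+\sigma\|z_h\|_{0,\Omega}^2\ge\varepsilon|z_h|_{1,\Omega}^2$. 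Moreover, because $0\le\alpha_E(w_h)\le1$, the diffusion self-term $d_h(w_h;z_h,z_h)=\sum_{E\in\calE_h}\gamma_0 h_E^2\alpha_E(w_h)\|\partial_{\bt}z_h\|_{0,E}^2$ is nonnegative and can simply be discarded.

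Next I would bound the three remaining, data-dependent terms, each of which is bilinear in a fixed object and in $z_h$. For $a(u_{bh},z_h)$, Cauchy--Schwarz on the three contributions, the bound $\|\bb\|_{L^\infty}$ on the convective part, and the Poincar\'e inequality $\|z_h\|_{0,\Omega}\le C_P|z_h|_{1,\Omega}$ give $|a(u_{bh},z_h)|\le C_a\|u_{bh}\|_{1,\Omega}|z_h|_{1,\Omega}$. For the cross diffusion term $d_h(w_h;u_{bh},z_h)$ I would repeat the computation behind estimate \eqref{Lip-2}: using $\alpha_E\le1$, Cauchy--Schwarz on each edge, a local trace inequality and the mesh regularity, one obtains $|d_h(w_h;u_{bh},z_h)|\le C\gamma_0 h\,|u_{bh}|_{1,\Omega}|z_h|_{1,\Omega}$. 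Finally, Cauchy--Schwarz and Poincar\'e give $|(f,z_h)_\Omega|\le C_P\|f\|_{0,\Omega}|z_h|_{1,\Omega}$.

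Collecting these bounds and applying Young's inequality to each perturbation term, with the parameters tuned so that the three negative contributions together absorb at most $\tfrac{\varepsilon}{2}|z_h|_{1,\Omega}^2$ of the coercive term, leaves
\[
[T_h z_h,z_h]\ge\tfrac{\varepsilon}{2}|z_h|_{1,\Omega}^2-c_2\big(\|u_{bh}\|_{1,\Omega}^2+\|f\|_{0,\Omega}^2\big),
\]
which is the assertion with $c_1=\varepsilon/2$ and a constant $c_2$ depending only on $\varepsilon,\sigma,\|\bb\|_{L^\infty},C_P,\gamma_0$ and (the bounded quantity) $h$, hence independent of $z_h,f,g$. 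I expect no serious obstacle here; the only point needing genuine care is the cross diffusion term $d_h(w_h;u_{bh},z_h)$, where the nonlinear weight $\alpha_E(w_h)$ and the passage from edge norms of tangential derivatives to the global $H^1$ seminorm must be handled. This is precisely where the uniform bound $\alpha_E\in[0,1]$ is decisive: it removes the nonlinearity from the estimate and reduces the term to the linear trace/mesh-regularity argument already carried out in Lemma \ref{Lemma-Lip-2}.
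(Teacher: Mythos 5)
Your proposal is correct and follows essentially the same route as the paper's proof: the same decomposition of $[T_h z_h,z_h]$ into the coercive self-terms ($a(z_h,z_h)\ge\varepsilon|z_h|^2_{1,\Omega}$ and $d_h(w_h;z_h,z_h)\ge 0$ via $\alpha_E\in[0,1]$) plus three data-dependent perturbations, each bounded by Cauchy--Schwarz, the local trace inequality with mesh regularity, and Poincar\'e, and finally Young's inequality to absorb them. The only difference is cosmetic: you make explicit the vanishing of the convective term $(\bb\cdot\nabla z_h,z_h)_\Omega=0$ for solenoidal $\bb$ and $z_h\in H^1_0(\Omega)$, which the paper uses implicitly when writing $a(z_h,z_h)=\varepsilon|z_h|^2_{1,\Omega}+(\sigma z_h,z_h)_\Omega$.
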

\begin{proof}  For this proof only, we will consider constants $C>0$ that may depend on the 
 the physical coefficients. From the definition of $a$ it follows
that
\begin{equation}
a(z_h,z_h)=\,\varepsilon\,|z_h|^2_{1,\Omega}+(\sigma z_h,z_h)\ge \varepsilon\,|z_h|^2_{1,\Omega}\,.
\end{equation}
Moreover, the definition of $d_h(\cdot;\cdot,\cdot)$ and the fact that $0\le  \alpha_E^{}(z_h + u_{bh})$ give
\begin{equation}
d_h(z_h + u_{bh} ; z_h, z_h) = 
\sum_{E\in \calE_h}\,\gamma_0^{} \, h_E^2\, \alpha_E^{}(z_h + u_{bh}) \|\partial_{\bt} z_h \|_{0,E}^2 \geq 0.
 \end{equation}
Then,  the definition of the operator $T_h$ gives
\begin{gather}
[T_h z_h, z_h] 
\ge \varepsilon |z_h|_{1,\Omega}^2   + a(u_{bh}, z_h) +
d_h(z_h + u_{bh} ; u_{bh}, z_h) - (f,z_h)_\Omega\,.
\end{gather}
Next, the Cauchy-Schwarz  and Poincar\'e inequalities lead to the following bound
 \begin{align}
&| a(u_{bh}, z_h)| \,=\, \left|\varepsilon (\nabla u_{bh},  \nabla z_h)_\Omega + (\bb\cdot \nabla u_{bh}, z_h)_\Omega
+(\sigma u_{bh},z_h)^{}\right| \nonumber \\ 
&\leq  \varepsilon\, |u_{bh}|_{1,{\Omega}}
 |z_h|_{1,{\Omega}} +   \| \bb\|_{\infty, \Omega} \|u_{bh}\|_{1,\Omega} \| z_h\|_{0,\Omega} +C\sigma\,\|u_{bh}\|_{0,\Omega}^{}\|z_h\|_{0,\Omega}^{} \nonumber\\
 &\leq C \|u_{bh}\|_{1,{\Omega}} | z_h|_{1,\Omega}\, .
 \end{align}
 In addition, using the mesh regularity, $\alpha_E^{}(\cdot)\le 1$, and the local trace inequality, we 
arrive at
  \begin{align}
  |d_h(z_h + u_{bh} ; u_ {bh}, z_h)  |&= \sum_{E\in \calE_h}\,  \gamma_0^{}\,  h_E^2 \,  \alpha_E^{}(z_h + u_{bh}) (\partial_{\bt} u_{bh} , \partial_{\bt} z_h)_E   \nonumber  \\
&\leq \sum_{E\in \calE_h}\, \gamma_0^{}\,  h_E^2\,  \|\partial_{\bt} u_{bh}\|_{0,E} \|\partial_{\bt} z_h \|_{0,E} \nonumber\\
& \leq   C  \, h \,|u_{bh}|_{1,\Omega} | z_h|_{1,\Omega} \, .
\end{align}
We can thus conclude that
 \begin{equation*}
[T_h z_h, z_h] \ge \varepsilon |z_h|_{1,\Omega}^2 - C\,\|u_{bh}\|_{1,\Omega} | z_h|_{1,\Omega} -
 \|f\|_{0,{\Omega}} \|z_h\|_{0,\Omega}\,.
\end{equation*} 
The claimed result arises by applying the Poincar\'e and Young inequalities to the last relation. \qed
\end{proof}

The solvability of the nonlinear problem \eqref{LSC} appears as a consequence of the above
result and Brower's fixed point Theorem.

\begin{theorem}
The  discrete problem \eqref{LSC} has at least one solution. Moreover, if $C_{\rm lip}\gamma_0^{}\,h < \varepsilon$, where $C_{\rm lip}$ is the constant from Lemma
\ref{Lemma-Lip-2}, then the
solution is unique.
\end{theorem}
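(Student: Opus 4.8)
The plan is to treat existence and uniqueness separately, since the heavy lifting has already been done in Lemma~\ref{exis-1} and Lemma~\ref{Lemma-Lip-2}. For existence, I would invoke the standard corollary of Brouwer's fixed point theorem: if $P$ maps a finite-dimensional inner-product space continuously into itself and there is a radius $R>0$ with $[P(z_h),z_h]>0$ for every $z_h$ on the sphere $|z_h|_{1,\Omega}=R$, then $P$ vanishes at some point of the ball $|z_h|_{1,\Omega}\le R$. I would apply this with $P=T_h$ on $\mathcal{V}_h^0$, viewed as a self-map through the Riesz identification induced by the inner product for which $[z_h,z_h]=|z_h|_{1,\Omega}^2$ (a genuine norm on $\mathcal{V}_h^0$ by Poincar\'e). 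Two ingredients are then required. First, continuity of $T_h$: the bilinear form $a$ is continuous, and $d_h(\cdot;\cdot,\cdot)$ is Lipschitz continuous by Lemma~\ref{Lemma-Lip-2}, so $z_h\mapsto T_h z_h$ is continuous. Second, the sign condition on a large sphere: the coercivity estimate \eqref{T_h_property} of Lemma~\ref{exis-1} yields $[T_h z_h,z_h]\ge c_1|z_h|_{1,\Omega}^2-c_2(\|u_{bh}\|_{1,\Omega}^2+\|f\|_{0,\Omega}^2)$, which is strictly positive as soon as $R^2>c_2(\|u_{bh}\|_{1,\Omega}^2+\|f\|_{0,\Omega}^2)/c_1$. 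Hence $T_h$ has a zero $z_h^\ast\in\mathcal{V}_h^0$, and $u_h:=z_h^\ast+u_{bh}$ solves \eqref{LSC}.

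For uniqueness under the assumption $C_{\rm lip}\gamma_0^{}h<\varepsilon$, I would take two solutions $u_h^1,u_h^2$ and set $e_h:=u_h^1-u_h^2\in\mathcal{V}_h^0$, which indeed lies in $\mathcal{V}_h^0$ because both solutions carry the same boundary data $u_{bh}$. Subtracting the two instances of \eqref{LSC} and testing with $v_h=e_h$ gives
\begin{equation*}
a(e_h,e_h)=-\big(d_h(u_h^1;u_h^1,e_h)-d_h(u_h^2;u_h^2,e_h)\big)\,.
\end{equation*}
On the left-hand side the convective contribution $(\bb\cdot\nabla e_h,e_h)_\Omega$ vanishes because $\bb$ is solenoidal and $e_h\in H_0^1(\Omega)$, while the reaction term is nonnegative, so $a(e_h,e_h)\ge\varepsilon|e_h|_{1,\Omega}^2$. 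On the right-hand side the difference of nonlinear forms is exactly the quantity estimated in Lemma~\ref{Lemma-Lip-2} with $v_h=u_h^1$, $w_h=u_h^2$, $z_h=e_h$, so its modulus is at most $C_{\rm lip}\gamma_0^{}h\,|e_h|_{1,\Omega}^2$. Combining the two bounds gives $(\varepsilon-C_{\rm lip}\gamma_0^{}h)|e_h|_{1,\Omega}^2\le 0$, and the smallness condition forces $|e_h|_{1,\Omega}=0$, hence $e_h=0$ and $u_h^1=u_h^2$.

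The step to get right is the uniqueness argument: the essential observation is that testing the difference of the two nonlinear forms against precisely $e_h=u_h^1-u_h^2$ reproduces the left-hand side of the Lipschitz estimate of Lemma~\ref{Lemma-Lip-2}, so that the threshold $\varepsilon>C_{\rm lip}\gamma_0^{}h$ appears naturally as the competition between the coercivity constant $\varepsilon$ of $a$ and the Lipschitz constant of the stabilisation. Existence is comparatively routine once the continuity of $T_h$ is noted; the only care needed there is to state the Brouwer corollary correctly and to keep in mind that $\mathcal{V}_h^0$ is finite-dimensional, so that $T_h$ can legitimately be regarded as a continuous self-map to which the corollary applies.
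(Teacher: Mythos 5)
Your proposal is correct and follows essentially the same route as the paper: existence via the Brouwer corollary applied to $T_h$, whose continuity comes from Lemma~\ref{Lemma-Lip-2} and whose sign condition on a large sphere comes from the bound \eqref{T_h_property} of Lemma~\ref{exis-1}; uniqueness by subtracting the two equations, testing with $e_h=u_h^1-u_h^2$, using $a(e_h,e_h)\ge\varepsilon|e_h|_{1,\Omega}^2$ and the Lipschitz bound to obtain $(\varepsilon-C_{\rm lip}\gamma_0^{}h)|e_h|_{1,\Omega}^2\le 0$. Your explicit remark that the convective term vanishes by the solenoidality of $\bb$ and $e_h\in H_0^1(\Omega)$ simply makes precise a step the paper leaves implicit.
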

\begin{proof}  First, since the bilinear form $a(\cdot,\cdot)$ is continuous, and $d_h(\cdot;\cdot,\cdot)$ is
Lipschitz continuous, then the operator $T_h$ is Lipschitz continuous. 
 Next, in view of  \eqref{T_h_property}, fow any $z_h\in \mathcal{V}_h^0$  such that 
\[
|z_h|^2_{1,\Omega} = 2\frac{c_2(\|u_{bh}\|_{1,\Omega}^2+\|f\|_{0,\Omega}^2)}{c_1}\,,
\]
Lemma \ref{exis-1} gives
\begin{equation}
[T_h z_h, z_h] =  c_2(\|u_{bh}\|_{0,\Omega}^2 + \|f\|_{0,\Omega}^2) >0.
\end{equation} 
Then, from a consequence of Brower's fixed point Theorem (see \cite[Corollary 1.1, Ch. IV]{GR86}), there exists $\tilde{v}_h\in\mathcal{V}_h^0$ such that $T_h(\tilde{v}_h)=0$. Hence, $u_h:=\tilde{v}_h+u_{bh}$ solves
\eqref{LSC}. 

In order to prove uniqueness, let $u_h^1, u_h^2$ be two solutions of \eqref{LSC}. Then, using \eqref{LSC}
for both solutions, denoting $\tilde{e}^{}_h:=u_h^1-u_h^2$, and using the Lipschitz continuity of $d_h(\cdot;\cdot,\cdot)$, we 
obtain
\begin{align}
\varepsilon\,|\tilde{e}^{}_h|^2_{1,\Omega} 
\le a(\tilde{e}^{}_h,\tilde{e}^{}_h) &= -d_h(u_h^1;u_h^1,\tilde{e}^{}_h)+d_h(u_h^2;u_h^2,\tilde{e}^{}_h)\nonumber \\
&\le C_{\rm lip}\gamma_0^{}h\,|\tilde{e}^{}_h|^2_{1,\Omega}\,.
\end{align}
This leads to
\begin{equation}
\big(\varepsilon-C_{\rm lip}\gamma_0^{}h\big)\,|\tilde{e}^{}_h|^2_{1,\Omega} \le 0\,,
\end{equation}
which, using that $\tilde{e}^{}_h\in H^1_0(\Omega)$,  finishes the proof. \qed
\end{proof}

\subsection{The discrete maximum principle}

This section is devoted to prove that Method \eqref{LSC} preserves positivity. For this, we will
impose the following geometric hypothesis on the mesh. This hypothesis can be tracked back to \cite{XZ99},
and in two space dimensions it reduces to impose that the mesh is Delaunay.

\begin{assumption}\label{XZ-assumption}[Hypothesis of Xu and Zikatanov, cf. \cite{XZ99}]
For every internal edge $E\in \calE_h$ with end points $x_i$ and $x_j$
 the following inequality holds
\begin{equation}
\frac{1}{d(d-1)}\sum_{K\in\omega_E}|\omega_{ij}^K|\cot (\theta_{ij}^K)\ge 0\,,
\end{equation}
where $\theta_{ij}^K$ is the angle between the two facets in $K$ opposite to $x_i$ and $x_j$
(denoted by $F_{i,K}$ and $F_{j,K}$, respectively), and
$\omega^K_{ij}$ is the $(d-2)$-dimensional simplex $F_{i,K} \cap F_{j,K}$ opposite to the edge $E$.
\end{assumption}

We now introduce the discrete analogous of the maximum principle. This definition is very close to
the one from \cite{BE05}, and it leads to results which are, essentially, identical to those from that reference.

\begin{definition}[DMP] 
The semilinear form $\tilde{a}(\cdot;\cdot)$ is said to satisfy the {\it strong DMP property}
if the following holds: For all $u_h\in \mathcal{V}_h$ and for all interior vertices $x_i$, if $u_h$ is locally
minimal (resp. maximal) on the vertex $x_i$ over the macro-element $\Omega_i$, then there exist
negative quantites $( c_E)_{E\in \calE_i}$ such that
\begin{equation}
\tilde{a}(u_h;\psi_i)\le \sum_{E\in \calE_i} c_E\big|\partial_{\bt}u_h|_{E}\big|\,,
\end{equation}
(resp. $\tilde{a}(u_h;\psi_i)\ge -\sum_{E\in\calE_i} c_E\big|\partial_{\bt}u_h|_{E}\big|$).
Furthermore, we will say that the semilinear form satisfies the weak DMP property, if
the local minimum above is supposed to be negative (resp. the local maximum is supposed
positive).
\end{definition}

A direct consequence of this definition is the following result, whose proof is completely analogous
to the one from \cite[Proposition 2.5]{BE05}.

\begin{lemma}
Assume that the semilinear form $\tilde{a}(\cdot;\cdot)$ satisfies the DMP property. Assume
that $u_h\in \mathcal{V}_h$ solves \eqref{LSC} and that $f\ge 0$. Then, $u_h$ reaches
its minimum on the boundary $\partial\Omega$.
\end{lemma}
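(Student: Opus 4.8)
The plan is to argue by contradiction, exploiting that a continuous piecewise affine function attains its extrema at the vertices of the triangulation. First I would set $m:=\min_{\overline{\Omega}}u_h$ and note that, since $u_h\in\mathcal{V}_h$ is piecewise affine, this value is attained at some node of $\calT_h$. Suppose, for contradiction, that the minimum is \emph{not} attained on $\partial\Omega$; then there exists an interior node $x_i$ with $u_h(x_i)=m$ and $m<u_h(x)$ for every boundary point $x$. In particular $u_h$ is locally minimal at $x_i$ over the macro-element $\Omega_i$, so the hypothesis that $\tilde{a}(\cdot;\cdot)$ satisfies the DMP property applies at $x_i$.

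Next I would test the discrete equation \eqref{LSC} with the nodal basis function $\psi_i$. Since $x_i$ is an interior node, $\psi_i\in\mathcal{V}_h^0$ is an admissible test function, and because $\psi_i\ge 0$ together with $f\ge 0$ we obtain $\tilde{a}(u_h;\psi_i)=(f,\psi_i)_\Omega\ge 0$. On the other hand, the DMP property furnishes negative coefficients $(c_E)_{E\in\calE_i}$ with $\tilde{a}(u_h;\psi_i)\le\sum_{E\in\calE_i}c_E\big|\partial_{\bt}u_h|_E\big|$. As each $c_E<0$ and each $\big|\partial_{\bt}u_h|_E\big|\ge 0$, the right-hand side is non-positive, so the two bounds together force $\tilde{a}(u_h;\psi_i)=0$ and, term by term, $\big|\partial_{\bt}u_h|_E\big|=0$ for every $E\in\calE_i$.

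The vanishing of the tangential derivative on each edge meeting $x_i$ means $u_h$ is constant along those edges, whence $u_h(x_j)=u_h(x_i)=m$ for all $j\in S_i$: the minimum is attained at every neighbour of $x_i$. The concluding step is a propagation argument. Using connectedness of the mesh, I would join $x_i$ to an arbitrary boundary node by a path of consecutive edges and show by induction along the path that each node carries the value $m$: whenever an interior node is reached it is again a global minimiser, hence locally minimal over its own macro-element, so the previous argument re-applies and forces the next node to the value $m$ as well. This places a boundary node in the set $\{u_h=m\}$, contradicting $m<\min_{\partial\Omega}u_h$. Hence the minimum is attained on $\partial\Omega$.

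I expect the propagation step to be the only delicate point: one must check that the local-minimum hypothesis can be invoked afresh at each interior node encountered — which is automatic, since $m$ is the \emph{global} minimum and any node where it is attained is a local minimiser over its macro-element — and that the path necessarily terminates on $\partial\Omega$, which follows from the connectedness of $\Omega$ and the finiteness of the node set. The remaining ingredients, namely the reduction to nodes, the non-negativity of $\psi_i$, and the sign bookkeeping in the DMP inequality, are entirely routine.
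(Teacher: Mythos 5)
Your proposal is correct and follows essentially the same route as the paper, which proves this lemma by deferring to \cite[Proposition 2.5]{BE05}: contradiction at an interior node carrying the global minimum, testing \eqref{LSC} with the hat function $\psi_i$ so that $0\le (f,\psi_i)_\Omega=\tilde{a}(u_h;\psi_i)\le \sum_{E\in\calE_i}c_E\big|\partial_{\bt}u_h|_E\big|\le 0$, forcing $\partial_{\bt}u_h=0$ on every edge of $\calE_i$, and then propagating the minimum value along mesh edges to a boundary node. The sign bookkeeping, the use of strictly negative $c_E$, and the re-application of the local-minimum hypothesis at each step of the propagation are all handled correctly.
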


The following result states the DMP for \eqref{LSC}.

\begin{theorem}
Let us suppose that the mesh $\calT_h$ satisfies Assumption \ref{XZ-assumption}, and that
the parameter $\gamma_0^{}$ is large enough.
Then, the semilinar form $\tilde{a}(\cdot;\cdot)$ satisfies the  DMP property.
\end{theorem}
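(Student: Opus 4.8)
The plan is to prove the strong DMP property by expanding the semilinear form $\tilde a(u_h;\psi_i)=a(u_h,\psi_i)+d_h(u_h;u_h,\psi_i)$ at an interior vertex $x_i$ where $u_h$ attains a local minimum over $\Omega_i$, and show that the total contribution can be written as a sum over edges $E\in\calE_i$ of nonpositive coefficients multiplying $|\partial_{\bt}u_h|_E|$. The key structural fact I would exploit is that the standard stiffness matrix entries admit the cotangent representation used in Assumption~\ref{XZ-assumption}: for the Laplacian part, $\varepsilon(\nabla u_h,\nabla\psi_i)_\Omega=\sum_{j\in S_i} k_{ij}(u_h(x_i)-u_h(x_j))$ with $k_{ij}=-\varepsilon\,(\nabla\psi_i,\nabla\psi_j)_\Omega\le 0$ precisely because the geometric hypothesis forces the off-diagonal stiffness entries to be nonpositive. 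Since $x_i$ is a local minimum, $u_h(x_i)-u_h(x_j)\le 0$ for all $j\in S_i$, so each term $k_{ij}(u_h(x_i)-u_h(x_j))$ is a product of a nonpositive $k_{ij}$ and a nonpositive difference, hence nonnegative — but note the sign convention in the DMP statement expects the left side to be bounded \emph{above} by a negative quantity, so I would carefully track signs and rewrite everything in terms of $|\partial_{\bt}u_h|_E|=h_E^{-1}|u_h(x_i)-u_h(x_j)|$ on each edge.

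First I would treat the three pieces of $a(u_h,\psi_i)$ separately. The reaction term $\sigma(u_h,\psi_i)_\Omega$ at a local minimum contributes with a controllable sign (this is exactly why the weak DMP, with negativity of the minimum, is needed when $\sigma>0$). The convection term $(\bb\cdot\nabla u_h,\psi_i)_\Omega$ is the problematic indefinite piece: it has no a priori sign, and its magnitude on each edge is $O(\|\bb\|_\infty h_E^2)|\partial_{\bt}u_h|_E|$ after applying the edge/element geometry and mesh regularity. Second, I would expand the diffusion term: by definition $d_h(u_h;u_h,\psi_i)=\sum_{E\in\calE_i}\gamma_0 h_E^2\alpha_E(u_h)(\partial_{\bt}u_h,\partial_{\bt}\psi_i)_E$, and since $\psi_i$ is the hat function, $\partial_{\bt}\psi_i$ on each edge $E\in\calE_i$ has a definite sign (it decreases away from $x_i$), producing a term proportional to $-\gamma_0 h_E\,\alpha_E(u_h)|\partial_{\bt}u_h|_E|$ — genuinely negative and available to dominate the bad convection contribution.

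The crux of the argument, and the main obstacle, is the interplay between the nonlinear weight $\alpha_E(u_h)$ and the convection term. The stabilising diffusion is only active where $\alpha_E(u_h)$ is bounded below, yet $\alpha_E$ can vanish in smooth regions. At a strict local extremum, however, $\xi_{u_h}(x_i)$ equals $1$ (all the differences $u_h(x_i)-u_h(x_j)$ share the same sign, so the numerator equals the denominator), hence $\alpha_E(u_h)=1$ for every $E\in\calE_i$; this is the decisive observation that guarantees the diffusion is fully switched on exactly where the DMP must be enforced. I would therefore first establish $\alpha_E(u_h)=1$ for all $E\in\calE_i$ at a local minimum, then bound the convection contribution on each edge by $C\|\bb\|_\infty h_E^2|\partial_{\bt}u_h|_E|$ and absorb it into the diffusion term, which requires choosing $\gamma_0$ large enough (depending on $\|\bb\|_\infty$, $\varepsilon$, and the mesh-regularity constants) so that $\gamma_0 h_E - C h_E^2\ge$ the residual positive contributions, yielding $c_E<0$.

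Collecting the estimates, the strong DMP coefficient on each edge takes the schematic form $c_E=-\gamma_0 h_E+Ch_E^2+(\text{cotangent/reaction terms})$, and the task reduces to verifying $c_E<0$ under Assumption~\ref{XZ-assumption} and for $\gamma_0$ sufficiently large. The hardest part will be the bookkeeping that converts the nodal cotangent and convection sums into a clean per-edge bound expressed through $|\partial_{\bt}u_h|_E|$, since the natural objects are nodal differences while the DMP statement is phrased edgewise; I expect the geometric hypothesis to be used precisely to discard or control the sign of the pure-diffusion off-diagonal entries so that only the convection term needs active domination by $d_h$. The maximum case follows by the symmetric argument applied to $-u_h$, and the weak DMP variant follows identically once the reaction contribution is handled using the sign of $u_h(x_i)$.
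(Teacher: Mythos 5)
Your overall strategy coincides with the paper's proof: the decisive observation that at a local extremum all nodal differences share one sign, hence $\xi_{u_h}(x_i)=1$ and $\alpha_E^{}(u_h)=1$ for every $E\in\calE_i$; the term-by-term treatment of $a(u_h,\psi_i)$; the use of Assumption~\ref{XZ-assumption} to control the sign of the Laplacian part; a per-edge bound of order $\|\bb\|_{\infty,E}h_E^2\big|\partial_{\bt}u_h|_E\big|$ for the convection term; and domination by the stabilisation term once $\gamma_0^{}$ is large enough. However, two points as written are genuine gaps. First, your sign bookkeeping for the diffusion term is backwards: Assumption~\ref{XZ-assumption} (Delaunay in two dimensions) gives $(\nabla\psi_i,\nabla\psi_j)_\Omega\le 0$, so $k_{ij}:=-\varepsilon(\nabla\psi_i,\nabla\psi_j)_\Omega\ge 0$, and at a minimum each product $k_{ij}(u_h(x_i)-u_h(x_j))$ is nonpositive, which yields $\varepsilon(\nabla u_h,\nabla\psi_i)_\Omega\le 0$ --- exactly the sign the DMP needs. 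With your stated $k_{ij}\le 0$ the Laplacian contribution would come out nonnegative and the argument would not close; you flag the discrepancy but do not resolve it.

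Second, and more substantively, the claim that the reaction term ``contributes with a controllable sign'' at a negative local minimum is false in general: $u_h(x_i)<0$ being minimal over $\Omega_i$ does not prevent $u_h$ from taking positive values elsewhere in the patch, so $(\sigma u_h,\psi_i)_\Omega$ can be positive. This is precisely where the paper's proof spends its technical effort: on each triangle where $u_h$ changes sign, one applies the edge-midpoint quadrature formula to $(\sigma u_h,\psi_i)_K$ together with a Taylor expansion about the points where $u_h$ vanishes on the edges, producing the bound $(\sigma u_h,\psi_i)_\Omega\le C_0\sigma\sum_{E\in\calE_i}h_E^3\big|\partial_{\bt}u_h|_E\big|$, which is then absorbed by requiring $\gamma_0^{}> C_0\sigma h_E+C_1\|\bb\|_{\infty,E}$. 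Your schematic $c_E$ leaves room for such a term, but the argument that produces it is missing, and it cannot be replaced by a pure sign argument. A minor further point: under Assumption~\ref{XZ-assumption} the threshold for $\gamma_0^{}$ does not involve $\varepsilon$, since the Laplacian term already has the favourable sign; dependence of $\gamma_0^{}$ on $\varepsilon h_E^{-1}$ only appears in the paper's remark concerning meshes that violate the assumption.
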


\begin{proof}
Let us suppose that $u_h$ has a negative local minimum at an interior node $x_i$. Then,
 $\alpha_E^{}(u_h)=1$ for all $E\in \calE_i$, which gives
\begin{equation}
\tilde{a}(u_h;\psi_i)= (\sigma u_h,\psi_i)_\Omega +\varepsilon(\nabla u_h,\nabla\psi_i)_\Omega+
(\bb\cdot\nabla u_h,\psi_i)_\Omega+\sum_{E\in \calE_i}\gamma_0^{}h_E^2(\partial_{\bt}u_h,\partial_{\bt} \psi_i)_E\,.
\end{equation}
We will analyse the expression above term-by-term. First, if
$u_h\le 0$ in the support of $\psi_i$, then $(\sigma
u_h,\psi_i)_\Omega \le 0$. Let us suppose now that $u_h$ changes sign in the support of $\psi_i$,
and let $K\in\Omega_i$ be an element in which $u_h$ changes sign. Let $x_j$ and $x_k$ be the other
nodes of $K$, $E_{ij}$ and $E_{ik}$ the correspoding edges, and $\tilde{x}_{ij}\in E_{ij}, \tilde{x}_{ik}\in E_{ik}$
the points in those edges in which $u_h$ vanishes (if $u_h$ vanishes in only one edge, then the proof is
analogous). Then, using
a quadrature formula and a Taylor expansion we obtain
\begin{align*}
(\sigma u_h,\psi_i)_K&=\sigma\frac{K}{6}\left(u_h\left(\frac{x_i+x_j}{2}\right)+u_h\left(\frac{x_i+x_k}{2}\right)\right)\\
&= \sigma\frac{K}{6}\left(u_h\left(\frac{x_i+x_j}{2}\right)-u_h(\tilde{x}_{ij})+u_h\left(\frac{x_i+x_k}{2}\right)-u_h(\tilde{x}_{ik})\right)\\
&\le \sigma\frac{K}{6}\big( h_{E_{ij}}^{}\big|\partial_{\bt}u_h|_{E_{ij}^{}}\big|+ h_{E_{ik}}^{}\big|\partial_{\bt}u_h|_{E_{ik}^{}}\big|\big)\,,
\end{align*}
and, adding up over all $K\in\Omega_i$ and using the mesh regularity we obtain
\begin{equation}
(\sigma u_h,\psi_i)_\Omega \leq C_0 \sigma  \sum_{E \in \mathcal{E}_i} h^3_E
|\partial_{\bf t} u_h|_E|\,.
\end{equation}
Also,  as in \cite{BE05} (see also \cite{RST08}), Assumption \ref{XZ-assumption} on the mesh leads to
\begin{equation}
\varepsilon (\nabla u_h,\nabla \psi_i)_\Omega\le 0\,.
\end{equation}
Moreover $\sum_{j=1}^N\psi_j=1$ gives $\sum_{j\in S_i} (\bb\cdot\nabla\psi_j,\psi_i)_\Omega=0$, and then
\begin{align}
(\bb\cdot\nabla u_h, \psi_i)_\Omega &= \sum_{j\in S_i}(\bb\cdot\nabla \psi_j, \psi_i)_\Omega u_h(x_j)
+(\bb\cdot\nabla \psi_i, \psi_i)_\Omega u_h(x_i) \nonumber\\
&= \sum_{j\in S_i}(\bb\cdot\nabla \psi_j, \psi_i)_\Omega \big( u_h(x_j)-u_h(x_i)\big) \nonumber\\
&= \sum_{E\in \calE_i}(\bb\cdot\nabla \psi_j, \psi_i)_\Omega h_E \big|\partial_{\bt} u_h|_E\big|\,, \label{229}
\end{align}
which, using the mesh regularity gives
\begin{equation}
(\bb\cdot\nabla u_h, \psi_i)_\Omega\le \sum_{E\in \calE_i} C_1\|\bb\|_{\infty,E}h_E^2|\partial_{\bt} u_h|_E|\,.
\end{equation}
Finally, since $u_h(x_i)$ is a local minimum, then in every $E\in \calE_i$, $\partial_{\bt} u_h$ and 
$\partial_{\bt} \psi_i$
have different signs (independently of the orientation of the tangential vector in $E$), which gives
\begin{equation}
\sum_{E\in \calE_i}\gamma_0^{}h_E^2(\partial_{\bt}u_h,\partial_{\bt} \psi_i)_E = 
-\sum_{E\in \calE_i}\gamma_0^{}h_E^2\big|\partial_{\bt} u_h|_E\big|\,.
\end{equation}

Hence, gathering all the above computations, we arrive at
\begin{equation}\label{232}
\tilde{a}(u_h;\psi_i)\le -\sum_{E\in \calE_i}(\gamma_0^{}- C_0 \sigma h_E-
C_1\|\bb\|_{\infty,E})h_E^2\big|\partial_{\bt} u_h|_E\big|\,,
\end{equation}
and the result follows assuming that $\gamma_0^{}> C_0 \sigma h_E + C_1\|\bb\|_{\infty,E}$. Finally, we notice that if $\sigma=0$
then the sign of the strict minimum is irrelevant. \qed
\end{proof}

\begin{remark} It is interesting to remark that the hypothesis on the meshes of the triangulation
can be avoided if the problem is supposed to be strongly convection-dominated. In fact, 
following analogous steps to those used to prove \eqref{229} we can arrive at 
\begin{equation}
\varepsilon(\nabla u_h,\nabla\psi_i)_\Omega = \varepsilon\sum_{E\in \calE_i}(\nabla\psi_j,\nabla\psi_i)_\Omega
h_E|\partial_{\bt}u_h| \le \sum_{E\in \calE_i}C_1\varepsilon h_E |\partial_{\bt}u_h|\,.
\end{equation}
Replacing this into the steps leading to \eqref{232} gives
\begin{equation}
\tilde{a}(u_h;\psi_i)\le -\sum_{E\in \calE_i}(\gamma_0^{}-C_0 \sigma h_E - C_1\|\bb\|_{\infty,E} - C_2\varepsilon h_E^{-1})h_E^2|\partial_{\bt} u_h|\,,
\end{equation}
and the proof follows by assuming that $\gamma_0^{}\ge C_0 \sigma h_E  +C_1\|\bb\|_{\infty,E} + C_2\varepsilon h_E^{-1}$.

The last result is only interesting if $\varepsilon h_E^{-1}$ stays bounded, which means this is
applicable only in the case the problem is highly convection-dominated. In this sense, the method proposed in this
work can be applied to scalar conservation laws, regardless of the geometrical impositions on the
mesh.  Similar results have been obtained recently in \cite{GN14CMAME,GNPY14SINUM}.
\end{remark}

\section{Convergence}\label{error-estimates}

The error will be  analysed using the following mesh-dependent norm:
\begin{equation}\label{norm}
\|v_h\|_h^2:= \sigma\|v_h\|^2_{0,\Omega}+\varepsilon\,|v_h|^2_{1,\Omega}+d_h(u_h;v_h,v_h)\,.
\end{equation}
As usual, the error $e:=u-u_h$ is  split as follows
\begin{equation}
e = u- u_h = (u- i_h^{}u) + (i_h^{} u - u_h):= \rho_h + e_h,
\end{equation}
where $i_h^{}:C^0(\overline{\Omega})\cap H^1_0(\Omega)\to\mathcal{V}^0_h$ stands for the Cl\'ement interpolation operator.
Using standard interpolation estimates (see \cite{EG04}), the fact that $\alpha_E^{}(\cdot)\le 1$, 
and the mesh regularity,  the following 
bound for $\rho_h$ follows:
\begin{equation}\label{interpol}
\|\rho_h\|_h^{}
  \le C( \varepsilon^{\frac{1}{2}} + \sigma^{\frac{1}{2}} h + \gamma_0^{}h^{\frac{1}{2}} )\,h\, \|u\|_{2,\Omega}.
\end{equation}

It only remains to estimate the discrete error. This is done in the next result.

\begin{lemma}\label{discrete-error}
Let us suppose $u\in H^2(\Omega)\cap H^1_0(\Omega)$.
 Then, there exists $C>0$, independent of $h$ and $\varepsilon$,  such that
\begin{equation}
\|e_h\|_h^{}
 \le C\,\big(\varepsilon+\sigma^{-1}\{\|\bb\|^2_{\infty,\Omega}+\sigma^2\}\big)^{\frac{1}{2}}h\|u\|_{2,\Omega}+Ch^{\frac{1}{2}}\,\|u\|_{1,\Omega}\,.
\end{equation}
\end{lemma}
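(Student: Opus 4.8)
The plan is to turn the error equation into an energy identity for $e_h$ and then estimate the resulting terms against the mesh norm \eqref{norm}. Since $u\in H^1_0(\Omega)$ we may take $u_{bh}=0$, so that $u_h\in\mathcal{V}_h^0$ and the exact solution obeys $a(u,v_h)=(f,v_h)_\Omega$ for every $v_h\in\mathcal{V}_h^0$. Subtracting this from \eqref{LSC} yields the consistency relation $a(u-u_h,v_h)=d_h(u_h;u_h,v_h)$ for all $v_h\in\mathcal{V}_h^0$. Writing $u-u_h=\rho_h+e_h$ with $\rho_h=u-i_h^{}u$, choosing $v_h=e_h$, and recalling that the solenoidal field makes the convection term vanish so that $a(e_h,e_h)=\varepsilon|e_h|_{1,\Omega}^2+\sigma\|e_h\|_{0,\Omega}^2$, I would obtain
\begin{equation*}
\|e_h\|_h^2=a(e_h,e_h)+d_h(u_h;e_h,e_h)=d_h(u_h;u_h,e_h)+d_h(u_h;e_h,e_h)-a(\rho_h,e_h)\,.
\end{equation*}

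The crucial algebraic step is that, for fixed first slot, $d_h(u_h;\cdot,\cdot)$ is a symmetric positive semidefinite bilinear form in its last two arguments; hence, using $u_h+e_h=i_h^{}u$,
\begin{equation*}
d_h(u_h;u_h,e_h)+d_h(u_h;e_h,e_h)=d_h(u_h;i_h^{}u,e_h)\,,
\end{equation*}
which leaves the compact identity $\|e_h\|_h^2=d_h(u_h;i_h^{}u,e_h)-a(\rho_h,e_h)$. This plays the role of a Galerkin orthogonality, which is genuinely absent here because the stabilisation is not consistent; the price is that the added diffusion survives when applied to the interpolant.

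It then remains to bound the two terms by $\|e_h\|_h$ and divide. For $a(\rho_h,e_h)$ I would use Cauchy--Schwarz termwise: the diffusive part gives $\varepsilon^{1/2}|\rho_h|_{1,\Omega}\|e_h\|_h$, the reactive part $\sigma^{1/2}\|\rho_h\|_{0,\Omega}\|e_h\|_h$, and, crucially, the convective part is controlled \emph{without} integration by parts as $\|\bb\|_{\infty,\Omega}|\rho_h|_{1,\Omega}\|e_h\|_{0,\Omega}\le\sigma^{-1/2}\|\bb\|_{\infty,\Omega}|\rho_h|_{1,\Omega}\|e_h\|_h$, exploiting $\|e_h\|_{0,\Omega}\le\sigma^{-1/2}\|e_h\|_h$. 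For the stabilisation term, Cauchy--Schwarz for the semidefinite form gives $d_h(u_h;i_h^{}u,e_h)\le d_h(u_h;i_h^{}u,i_h^{}u)^{1/2}\,d_h(u_h;e_h,e_h)^{1/2}\le d_h(u_h;i_h^{}u,i_h^{}u)^{1/2}\|e_h\|_h$. Inserting the Cl\'ement estimates $|\rho_h|_{1,\Omega}\le Ch\|u\|_{2,\Omega}$ and $\|\rho_h\|_{0,\Omega}\le Ch^2\|u\|_{2,\Omega}$ collapses the $a(\rho_h,e_h)$ contribution into the first term of the claim, the $\sigma^{1/2}$ piece fitting because the prefactor $(\varepsilon+\sigma^{-1}\{\|\bb\|_{\infty,\Omega}^2+\sigma^2\})^{1/2}$ is bounded below by $\sigma^{1/2}$.

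The main obstacle, and the source of the suboptimal $O(h^{1/2})$ term, is estimating $d_h(u_h;i_h^{}u,i_h^{}u)$, since the edge diffusion does not vanish on the interpolated exact solution. Using $\alpha_E^{}(\cdot)\le1$ I would bound it by $\gamma_0^{}\sum_{E}h_E^2\|\partial_{\bt}i_h^{}u\|_{0,E}^2$, split $\partial_{\bt}i_h^{}u=\partial_{\bt}u-\partial_{\bt}(u-i_h^{}u)$, and apply a local trace inequality together with the interpolation estimates. The exact-solution part is bounded by $C\gamma_0^{}\sum_E h_E^2\big(h_E^{-1}|u|_{1,\omega_E}^2+h_E|u|_{2,\omega_E}^2\big)\le C\gamma_0^{}\big(h\|u\|_{1,\Omega}^2+h^2\|u\|_{2,\Omega}^2\big)$, whose leading $h^{1/2}\|u\|_{1,\Omega}$ contribution is exactly the second term of the claim, while the interpolation remainder is of higher order $O(h^{3/2}\|u\|_{2,\Omega})$. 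Collecting all bounds, dividing by $\|e_h\|_h$, and absorbing the $O(h\|u\|_{2,\Omega})$ stabilisation piece into the first term (again via the lower bound $\sigma^{1/2}$ on the prefactor) yields the stated estimate; the only delicate point is the bookkeeping that keeps the dependence on $\varepsilon$, $\sigma$ and $\bb$ explicit while allowing $C$ to depend on $\gamma_0^{}$.
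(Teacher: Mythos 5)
Your proposal is correct and follows essentially the same route as the paper: you arrive at the identical error identity $\|e_h\|_h^2=-a(\rho_h,e_h)+d_h(u_h;i_h^{}u,e_h)$ (via subtraction of the consistency relation rather than the paper's add-and-subtract), bound $a(\rho_h,e_h)$ with the same weighted Cauchy--Schwarz argument using $\|e_h\|_{0,\Omega}\le\sigma^{-1/2}\|e_h\|_h$, and treat the stabilisation term by Cauchy--Schwarz for the semidefinite form followed by the $O(h)$ bound on $d_h(u_h;i_h^{}u,i_h^{}u)$. The only cosmetic difference is that you estimate $\|\partial_{\bt}i_h^{}u\|_{0,E}$ by splitting off the interpolation error and applying the trace inequality to $u$, whereas the paper applies the trace inequality directly to the piecewise linear $i_h^{}u$ and invokes the $H^1(\Omega)$-stability of the Cl\'ement operator; both yield the same $Ch\|u\|_{1,\Omega}^2$ leading term.
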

\begin{proof}
First, from the definition of $a$ and $d_h$ we get
\begin{align}
\|e_h\|^2_h =&\, a(e_h,e_h)+d_h(u_h;e_h,e_h) \nonumber\\
=& \,a(i_h^{}u,e_h)-\big\{ a(u_h,e_h)+d_h(u_h;u_h,e_h)\big\}+d_h(u_h;i_h^{}u,e_h)  \nonumber\\
=& \,-a(\rho_h,e_h)+d_h(u_h;i_h^{}u,e_h)\,. \label{error-1}
\end{align}
Next,  the continuity of $a$ gives
\begin{align}
a(\rho_h,e_h) &\leq  
(\sigma\|\rho_h\|^2_{0,\Omega}+[\varepsilon+\sigma^{-1}\|\bb\|_{\infty,\Omega}^2]
\,|\rho_h|^2_{1,\Omega})^{\frac12}
\|e_h\|_h\, \nonumber \\
&\leq C( \varepsilon^{\frac{1}{2}} + \sigma^{-1/2} \|\bb\|_{\infty,\Omega}+\sigma^{\frac{1}{2}} h )\,h\, \|u\|_{2,\Omega}\|e_h\|_h\,.\label{error-2}
\end{align}
Moreover, since $d_h(u_h;\cdot,\cdot)$ is a symmetric positive semi-definite bilinear form it satisfies
Cauchy-Schwarz's inequality, which gives
\begin{equation}\label{error-3}
d_h(u_h;i_h^{}u,e_h)\le d_h(u_h;i_h^{}u,i_h^{}u)^{\frac{1}{2}}d_h(u_h;e_h,e_h)^{\frac{1}{2}}\le 
d_h(u_h;i_h^{}u,i_h^{}u)^{\frac{1}{2}} \|e_h\|_h\,.
\end{equation}
Then, inserting \eqref{error-2} and \eqref{error-3} into \eqref{error-1}, and using Young's inequality,
we arrive at
\begin{equation}\label{error-4}
\|e_h\|^2_h \le C( \varepsilon^{\frac{1}{2}} + \sigma^{-1/2} \|\bb\|_{\infty,\Omega}+\sigma^{\frac{1}{2}} h )^2\,h^2\, \|u\|^2_{2,\Omega} +C\,d_h(u_h;i_h^{}u,i_h^{}u)\,.
\end{equation}
It only remains to bound the consistency error $d_h(u_h;i_h^{}u,i_h^{}u)$ in \eqref{error-4}.
The definition of $d_h(\cdot;\cdot,\cdot)$, $\alpha_E^{}(u_h)\le 1$,
a local trace inequality, the mesh regularity, and the $H^1(\Omega)$-stability of $i_h^{}$, give
\begin{equation}
d_h(u_h;i_h^{}u,i_h^{}u) =\sum_{E\in\calE_h}\gamma_0^{}h_E^2\alpha_E^{}(u_h)\|\partial_{\bt}i_h^{}u\|_{0,E}^2 
\le \gamma_0^{}h\sum_{E\in\calE_h}h_E\,\|\partial_{\bt}i_h^{}u\|^2_{0,E}\le Ch\,\|u\|^2_{1,\Omega}\,. \label{error-5}
\end{equation}
Then, the result arises inserting \eqref{error-5} into \eqref{error-4}. \qed
\end{proof}

Collecting \eqref{interpol} and Lemma \ref{discrete-error} we then obtain the following error estimate for \eqref{LSC}.

\begin{theorem}
Let us suppose $u\in H^2(\Omega)\cap H^1_0(\Omega)$.
 Then, there exists $C>0$, independent of $h$ and $\varepsilon$,  such that
\begin{equation}
\|e\|_h^{}
 \le C\,\big(\varepsilon+\sigma^{-1}\{\|\bb\|^2_{\infty,\Omega}+\sigma^2\}\big)^{\frac{1}{2}}h\|u\|_{2,\Omega}+Ch^{\frac{1}{2}}\,\|u\|_{1,\Omega}\,.
\end{equation}
\end{theorem}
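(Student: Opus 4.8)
The plan is to assemble the final estimate from the two bounds already in hand, using the triangle inequality for the mesh-dependent norm $\|\cdot\|_h$. First I would observe that $\|\cdot\|_h$ is genuinely a norm, and hence subadditive, on the relevant space: writing $\|v\|_h^2=\sigma\|v\|_{0,\Omega}^2+\varepsilon|v|_{1,\Omega}^2+d_h(u_h;v,v)$, the form $d_h(u_h;\cdot,\cdot)$ is symmetric and positive semi-definite (as already used in \eqref{error-3}), so $\|\cdot\|_h$ is induced by the symmetric positive semi-definite bilinear form $\sigma(\cdot,\cdot)_\Omega+\varepsilon(\nabla\cdot,\nabla\cdot)_\Omega+d_h(u_h;\cdot,\cdot)$, and the triangle inequality applies. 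Consequently, from the splitting $e=u-u_h=\rho_h+e_h$ with $\rho_h=u-i_h^{}u$ and $e_h=i_h^{}u-u_h$, I obtain $\|e\|_h\le\|\rho_h\|_h+\|e_h\|_h$. One point worth recording is that $\|\rho_h\|_h$ involves $d_h(u_h;\rho_h,\rho_h)$ evaluated at the \emph{non-discrete} function $\rho_h$; this is well defined because $u\in H^2(\Omega)$ ensures, through a local trace inequality, that the edge tangential derivatives $\partial_{\bt}\rho_h$ lie in $L^2(E)$ — precisely the argument underlying \eqref{interpol}.

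Second, I would substitute the two estimates directly: \eqref{interpol} controls $\|\rho_h\|_h$ and Lemma~\ref{discrete-error} controls $\|e_h\|_h$. Since the claimed bound is literally the right-hand side of Lemma~\ref{discrete-error}, the only remaining task is to verify that the interpolation contribution $\|\rho_h\|_h\le C(\varepsilon^{\frac12}+\sigma^{\frac12}h+\gamma_0^{}h^{\frac12})h\|u\|_{2,\Omega}$ is absorbed by the target. Term by term: $\varepsilon^{\frac12}h\|u\|_{2,\Omega}$ is dominated by $(\varepsilon+\sigma^{-1}\{\|\bb\|_{\infty,\Omega}^2+\sigma^2\})^{\frac12}h\|u\|_{2,\Omega}$ because $\varepsilon^{\frac12}\le(\varepsilon+\cdots)^{\frac12}$; the term $\sigma^{\frac12}h^2\|u\|_{2,\Omega}$ is controlled by the same quantity since $\sigma^{-1}\{\|\bb\|_{\infty,\Omega}^2+\sigma^2\}\ge\sigma$ gives $(\varepsilon+\sigma^{-1}\{\|\bb\|_{\infty,\Omega}^2+\sigma^2\})^{\frac12}\ge\sigma^{\frac12}$, and $h\le1$ absorbs the extra power of $h$; finally $\gamma_0^{}h^{\frac32}\|u\|_{2,\Omega}$ is absorbed into the first term after bounding $\gamma_0^{}h^{\frac12}$ by a constant on the admissible range of $h$. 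Collecting these, $\|\rho_h\|_h$ is of the same or smaller order than the bound on $\|e_h\|_h$, and the stated estimate follows.

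The computation is therefore routine, and I do not expect any genuine obstacle: all the difficulty has already been dispatched in the Lipschitz continuity, the consistency bound \eqref{error-5}, and the interpolation estimate \eqref{interpol}. If I had to name the one place demanding care, it is the bookkeeping that keeps $C$ independent of both $h$ and $\varepsilon$ — in particular, checking that no inverse power of $\varepsilon$ is introduced when comparing the $\varepsilon^{\frac12}$ interpolation term against the factor $(\varepsilon+\sigma^{-1}\{\|\bb\|_{\infty,\Omega}^2+\sigma^2\})^{\frac12}$, and verifying that the $h^{\frac12}\|u\|_{1,\Omega}$ contribution (which reflects the suboptimal consistency of the stabilisation in the low-diffusion regime and explains why optimality is only recovered in the diffusion-dominated case) is inherited unchanged from $\|e_h\|_h$ rather than spuriously improved. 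With these checks in place, the triangle inequality and the two bounds yield the result.
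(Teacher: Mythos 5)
Your proposal is correct and takes essentially the same route as the paper, which obtains the theorem by simply collecting the interpolation estimate \eqref{interpol} and Lemma \ref{discrete-error} through the triangle inequality for the norm $\|\cdot\|_h$ induced by the symmetric positive semi-definite form $\sigma(\cdot,\cdot)_\Omega+\varepsilon(\nabla\cdot,\nabla\cdot)_\Omega+d_h(u_h;\cdot,\cdot)$. The absorption bookkeeping you spell out (using $\varepsilon^{\frac12}\le(\varepsilon+\sigma^{-1}\{\|\bb\|^2_{\infty,\Omega}+\sigma^2\})^{\frac12}$, $\sigma^{-1}\{\|\bb\|^2_{\infty,\Omega}+\sigma^2\}\ge\sigma$, and $h\le 1$) is exactly what the paper leaves implicit.
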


The following result states that for meshes which are symmetric with respect to their interior nodes, 
the method converges with a higher order. This result's main interest lies in the diffusion dominated
regime, due to the factor $\varepsilon^{-\frac{1}{2}}$ present in  the estimate. The combination
of Lipschitz continuity and linearity preservation seems to be novel, and that is why we do detail
it now.

\begin{theorem}\label{theo-optimal}
Let us suppose $u\in H^2(\Omega)\cap H^1_0(\Omega)$ and that the mesh is symmetric with respect to
its internal nodes.
 Then, there exists $C>0$, independent of $h$ and $\varepsilon$,  such that
\begin{equation}
\|e\|_h^{}
 \le C\,\big(\varepsilon+\sigma^{-1}\{\|\bb\|^2_{\infty,\Omega}+\sigma^2\}\big)^{\frac{1}{2}}h\|u\|_{2,\Omega}+C\frac{h}{\sqrt{\varepsilon}}\,
\|u\|_{1,\Omega}\,.
\end{equation}
\end{theorem}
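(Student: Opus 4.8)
The plan is to follow the same route as Lemma~\ref{discrete-error}, improving \emph{only} the bound on the consistency term of the stabilisation by exploiting linearity preservation, which is available precisely because the mesh is symmetric with respect to its internal nodes. Since $\|e\|_h\le\|\rho_h\|_h+\|e_h\|_h$ and $\|\rho_h\|_h$ is already controlled by \eqref{interpol}, I would only need to revisit the bound for $\|e_h\|_h$. Starting from the identity \eqref{error-1}, $\|e_h\|_h^2=-a(\rho_h,e_h)+d_h(u_h;i_h^{}u,e_h)$, the term $a(\rho_h,e_h)$ is treated exactly as in \eqref{error-2} and, after Young's inequality, produces the optimal $h\|u\|_{2,\Omega}$ contribution. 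Everything then hinges on the consistency term: I keep the Cauchy--Schwarz step $d_h(u_h;i_h^{}u,e_h)\le d_h(u_h;i_h^{}u,i_h^{}u)^{1/2}\|e_h\|_h$, but replace the crude estimate $d_h(u_h;i_h^{}u,i_h^{}u)\le Ch\|u\|_{1,\Omega}^2$ of \eqref{error-5}, which only used $\alpha_E^{}\le 1$, by a sharper one.

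The crux is to gain a full power of $h$ in $d_h(u_h;i_h^{}u,i_h^{}u)$. To this end I would split $\alpha_E^{}(u_h)=\alpha_E^{}(i_h^{}u)+\big(\alpha_E^{}(u_h)-\alpha_E^{}(i_h^{}u)\big)$ and treat the two pieces separately. For the \emph{smooth} piece, linearity preservation gives $\xi_{\pi}(x_i)=0$ for every affine function $\pi$ on the patch $\Omega_i$; choosing $\pi$ as a local (Bramble--Hilbert) affine approximation of $u$ and invoking Lemma~\ref{Lem-Lip-1} at the endpoint $x_i$ realising $\alpha_E^{}$, I obtain the key inequality $\alpha_E^{}(i_h^{}u)\,\big|\partial_{\bt}i_h^{}u|_E\big|\le\tfrac{C}{h_E}\sum_{F\in\calE_i}h_F\big|\partial_{\bt}(i_h^{}u-\pi)|_F\big|$. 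Multiplying by $\big|\partial_{\bt}i_h^{}u|_E\big|$, summing, and using $\big|\partial_{\bt}(i_h^{}u-\pi)\big|\le Ch\,|u|_2$ together with the mesh regularity and Cauchy--Schwarz over edges, this bounds $d_h(i_h^{}u;i_h^{}u,i_h^{}u)$ by $Ch^2\|u\|_{2,\Omega}^2$. For the \emph{transfer} piece, the Lipschitz estimate of Lemma~\ref{Lem-Lip-1}, combined with $|a^p-b^p|\le p|a-b|$ as in Lemma~\ref{Lemma-Lip-2}, controls $|\alpha_E^{}(u_h)-\alpha_E^{}(i_h^{}u)|$ by a quotient with numerator $C\sum_{F\in\calE_i}h_F\big|\partial_{\bt}e_h|_F\big|$ and denominator $\sum_{F\in\calE_i}h_F\big|\partial_{\bt}i_h^{}u|_F\big|$; pairing the denominator against one factor $\big|\partial_{\bt}i_h^{}u|_E\big|$, the remaining factor against $\|u\|_{1,\Omega}$, and using $|e_h|_{1,\Omega}\le\varepsilon^{-1/2}\|e_h\|_h$, produces a bound of the form $C\gamma_0^{}h\,\varepsilon^{-1/2}\|u\|_{1,\Omega}\|e_h\|_h$. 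This transfer term is exactly the source of both the factor $\varepsilon^{-1/2}$ and the norm $\|u\|_{1,\Omega}$ in the statement.

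To conclude I would insert these two bounds into $d_h(u_h;i_h^{}u,i_h^{}u)^{1/2}\|e_h\|_h$ using $\sqrt{a+b}\le\sqrt a+\sqrt b$. The smooth part gives $Ch\|u\|_{2,\Omega}\|e_h\|_h$, absorbed by Young in the usual way. The transfer part gives $\big(C\gamma_0^{}h\,\varepsilon^{-1/2}\|u\|_{1,\Omega}\|e_h\|_h\big)^{1/2}\|e_h\|_h=C(\gamma_0^{}h\,\varepsilon^{-1/2}\|u\|_{1,\Omega})^{1/2}\|e_h\|_h^{3/2}$, to which I apply Young's inequality with the conjugate exponents $(4,\tfrac43)$; this yields exactly $C\gamma_0^{2}\tfrac{h^2}{\varepsilon}\|u\|_{1,\Omega}^2$ plus a fraction of $\|e_h\|_h^2$ that can be absorbed on the left-hand side. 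Collecting terms for $\|e_h\|_h$ and combining with \eqref{interpol} then gives the claimed estimate.

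The step I expect to be the main obstacle is the bound on $\alpha_E^{}(i_h^{}u)$, because the definition of $\xi$ carries a normalisation by $\sum_{F\in\calE_i}h_F\big|\partial_{\bt}i_h^{}u|_F\big|$ that \emph{degenerates} near critical points of $u$, precisely where $\alpha_E^{}\approx1$ rather than small. The resolution, and the technical heart of the argument, is that one never divides by this quantity: since the single edge contribution $h_E\big|\partial_{\bt}i_h^{}u|_E\big|$ is always dominated by the full patch sum, the product $\alpha_E^{}(i_h^{}u)\big|\partial_{\bt}i_h^{}u|_E\big|$ is controlled directly by the numerator $\sum_{F\in\calE_i}h_F\big|\partial_{\bt}(i_h^{}u-\pi)|_F\big|$, which is $O(h\,|u|_2)$, sidestepping the degeneracy. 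Correctly accounting for the nonlinear dependence of $\alpha_E^{}(u_h)$ on $e_h$ in the transfer term, and handling the resulting fractional power of $\|e_h\|_h$ via Young, are the remaining delicate points; notably, they are arranged so that \emph{no} mesh condition of the form $h\lesssim\varepsilon$ is needed for the absorption, even though the estimate is only informative in the diffusion dominated regime.
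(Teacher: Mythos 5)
Your proposal is correct and takes essentially the same route as the paper: the paper also reduces everything to the consistency term and splits it as $d_h(u_h;i_h^{}u,i_h^{}u)=\{d_h(u_h;i_h^{}u,i_h^{}u)-d_h(i_h^{}u;i_h^{}u,i_h^{}u)\}+d_h(i_h^{}u;i_h^{}u,i_h^{}u)$, bounding the transfer part via the Lipschitz machinery of Lemma~\ref{Lemma-Lip-2} by $Ch\,|e_h|_{1,\Omega}\|u\|_{1,\Omega}$ and Young's inequality with $\varepsilon$ (the source of the $h/\sqrt{\varepsilon}$ term), and bounding the smooth part by inserting a local affine function with vanishing limiter thanks to linearity preservation, exactly as you do. The only cosmetic differences are that the paper uses a local $H^1$-projection $i_E^{}u\in\PP_1(\omega_E)$ in place of your Bramble--Hilbert affine approximant, and applies Young's inequality to the transfer term before the Cauchy--Schwarz step, so it never encounters your fractional power $\|e_h\|_h^{3/2}$ and the $(4,\tfrac{4}{3})$-Young argument; the ingredients and resulting bounds are the same.
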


\begin{proof} It is enough to bound  the consistency error $d(u_h;i_h^{}u,i_h^{}u)$. We have
\begin{align}
d_h(u_h;i_h^{}u,i_h^{}u)&=\big\{ d_h(u_h;i_h^{}u,i_h^{}u)-d_h(i_h^{}u;i_h^{}u,i_h^{}u)\big\}+d_h(i_h^{}u;i_h^{}u,i_h^{}u)\nonumber\\
&=:\mathrm{I}+\mathrm{II}\,. \label{optimal-1}
\end{align}
The first term is bounded as in the proof of Lemma \ref{Lemma-Lip-2}. In fact, in that proof,
the bound for the second term in \eqref{Lip-1} leads to the following
\begin{align}
\mathrm{I}&=\sum_{E\in\calE_h}(\alpha_E^{}(u_h)-\alpha_E(i_h^{}u))\gamma_0^{}h_E^2(\partial_{\bt} i_h^{}u,\partial_{\bt} i_h^{}u)_E\nonumber\\
&\le Ch|u_h-i_h^{}u|_{1,\Omega}^{}|i_h^{}u|^{}_{1,\Omega}\nonumber\\
&\le \frac{\varepsilon}{2}\,|u_h-i_h^{}u|_{1,\Omega}^2+C\frac{h^2}{\varepsilon}\,\|u\|^2_{1,\Omega}\,,\label{optimal-2}
\end{align}
where we have also used the $H^1(\Omega)$-stability of $i_h^{}$.
To bound $\mathrm{II}$ we use the linearity preservation and the Lipschitz continuity of $d_h(\cdot;\cdot,\cdot)$.
More precisely, for a given $E\in\calE_h$ we introduce the function $i_E^{}u\in\PP_1(\omega_E)$ as the
unique solution of the problem
\begin{align}
(\nabla i_E^{}u,\nabla\psi)_{\omega_E^{}}&=(\nabla u,\nabla\psi)_{\omega_E^{}}\quad\forall\,\psi\in \PP_1(\omega_E^{})\,,\label{optimal-3a}\\
(i_E^{}u,1)_{\omega_E^{}}&=(u,1)_{\omega_E^{}}\,.\nonumber
\end{align}
Using standard finite element approximation results (see \cite{EG04}), $i_E^{}u$ satisfies
\begin{equation}
|u-i_E^{}u|^{}_{1,\omega_E^{}}\le Ch_E^{}|u|^{}_{2,\omega_E^{}}\,.\label{optimal-3b}
\end{equation}
Since the mesh is symmetric with respect to its internal nodes,  $\alpha_E^{}(i_E^{}u)=0$. Then,
proceeding as in the bound for $\mathrm{I}$ we obtain
\begin{align}
\mathrm{II} &= \sum_{E\in\calE_h} (\alpha_E^{}(i_h^{}u)-\alpha_E^{}(i_E^{}u))\gamma_0^{}h_E^2\,
(\partial_{\bt} i_h^{}u,\partial_{\bt} i_h^{}u)_E\nonumber\\
&\le Ch\,\left\{\sum_{E\in\calE_h}|i_h^{}u-i_E^{}u|^2_{1,\omega_E^{}}\right\}^{\frac{1}{2}}|i_h^{}u|_{1,\Omega}^{}\nonumber\\
&\le Ch^2|u|_{2,\Omega}^{}\|u\|_{1,\Omega}^{}\,.\label{optimal-4}
\end{align}
Then, inserting \eqref{optimal-2} and \eqref{optimal-4} into \eqref{optimal-1} we obtain
\begin{equation}\label{optimal-5}
d_h(u_h;i_h^{}u,i_h^{}u)\le \frac{\varepsilon}{2}\,|u_h-i_h^{}u|_{1,\Omega}^2+C\frac{h^2}{\varepsilon}\,\|u\|^2_{1,\Omega}+Ch^2|u|_{2,\Omega}^{}\|u\|_{1,\Omega}^{}\,,
\end{equation}
and the result follows by rearranging terms. \qed
\end{proof}

\section{A link to algebraic flux correction schemes}\label{link-AFC}

Method \eqref{LSC} has been presented having as motivation the study of the effect of adding
edge-based diffusion into the equations to impose the discrete maximum principle. Another 
family of methods that are built with the same purpose is the
AFC schemes.
This section is devoted to study the relationship between the two approaches, and that is why
we now summarise the main building principles of AFC schemes.

The starting point of an algebraic flux-correction scheme is a discretisation of the convection-diffusion-reaction
equation which leads to the linear system  
 \begin{equation}\label{Galerkin}
\mathbb{A}\rmU=\mathbb{G}\,,
\end{equation}
where $\mathbb{A}=(a_{ij})_{i,j=1}^N$, $\rmU=\{u_h(x_i)\}_{i=1}^N$ and $\mathbb{G}=\{g_i\}_{i=1}^N$.
 The first step of these schemes is to identify which parts of the system matrix $\mathbb{A}$ 
are responsible for the violation of the discrete maximum principle. To achieve this, the diffusion matrix
$\mathbb{D}=(d_{ij})_{i,j=1}^N$ is built, where
\[
d_{ij}=d_{ji}=-\max\{ a_{ij},0,a_{ji}\}\quad\forall i\not= j\,\quad d_{ii}=-\sum_{j\not= i} d_{ij}\,.
\]
Adding $\mathbb{D}\rmU$ both sides of \eqref{Galerkin} we obtain
\begin{equation}
\tilde{\mathbb{A}\rm}\rmU=\mathbb{G}+\mathbb{D}\rmU\,,
\end{equation}
where $\tilde{\mathbb{A}}:=\mathbb{A}+\mathbb{D}$. Since the matrix $\tilde{\mathbb{A}}$ fullfils the
hypothesis to guarantee the discrete maximum principle, then the oscillations
that appear in a non-stabilised discretisation \eqref{Galerkin} are due to the right-hand side.  
This is why the right-hand side is now rewritten. Using that the row-sums of $\mathbb{D}$ are zero, then
\[
(\mathbb{D}\rmU)_i=\sum_{j\not= i}f_{ij}\qquad\textrm{where}\quad f_{ij}=d_{ij}(u_h(x_j)-u_h(x_i))\,.
\]
The quantities $f_{ij}$ are called {\it fluxes}.
Then, the AFC schemes are based on introducing limiters $\alpha_{ij}(u_h)$ such that $\alpha_{ij}\in [0,1]$,
 $\alpha_{ij}=\alpha_{ji}$, and $\alpha_{ij}=1$ if $x_i$ and $x_j$ are both Dirichlet nodes. Then, after introducing
these limiters, the method reads as follows:
\begin{equation}\label{AFC-alge}
\mathbb{A}\rmU+\sum_{i, j=1}^N(1-\alpha_{ij}(u_h)) d_{ij}\,(u_h(x_j)-u_h(x_i))= g_i\,.
\end{equation}

The most popular limiters in practice are Zalesak's limiters (see, \cite{Zalesak79,Kuzmin06,Kuzmin07,Kuzmin12},
and the recent review \cite{Kuzmin-book} for examples). The analysis of this class of methods for
a class of limiters that includes the Zalesak one  has been carried out recently in \cite{BJK15,BJK15b}. In particular,
in \cite{BJK15b} an $O(h^{\frac{1}{2}})$ convergence rate was proved for the case in which the
mesh used satisfies Assumption \ref{XZ-assumption}. In the case of meshes that do not satisfy
this assumption, then no convergence can be proved. This result is optimal, as the numerical
results in \cite{BJK15b} show.

Following \cite{BJK15b}, \eqref{AFC-alge} can be written as the following weak problem:
Find $u_h\in\mathcal{V}_h$ such that $u_h-u_{bh}\in\mathcal{V}_h^0$, and 
\begin{equation}
a(u_h,v_h)+\tilde{d}_h(u_h;u_h,v_h)=(f,v_h)_\Omega\qquad\forall\, v_h\in\mathcal{V}_h^0\,,
\end{equation}
where the nonlinear form $\tilde{d}_h(\cdot;\cdot,\cdot)$ is given by
\begin{equation}
\tilde{d}_h(u_h;u_h,v_h)=\sum_{i, j=1}^N(1-\alpha_{ij}(u_h)) d_{ij}\,(u_h(x_j)-u_h(x_i))v_h(x_i)\,.
\end{equation}
Next, to link this to the method analysed in the last sections, we use the symmetry of $\mathbb{D}$, and of
the limiters $\alpha_{ij}=\alpha_{ji}$, and a simple calculation gives:
\begin{align}
\tilde{d}_h(u_h;u_h,v_h)&=\sum_{i>j}(1-\alpha_{ij}(u_h)) d_{ij}\,(u_h(x_j)-u_h(x_i))v_h(x_i) \nonumber\\
&\quad + \sum_{i<j}(1-\alpha_{ij}(u_h)) d_{ij}\,(u_h(x_j)-u_h(x_i))v_h(x_i) \nonumber\\
&= \sum_{i>j}(1-\alpha_{ij}(u_h)) d_{ij}\,(u_h(x_j)-u_h(x_i))v_h(x_i)\nonumber\\
&\quad + \sum_{i>j}(1-\alpha_{ji}(u_h)) d_{ji}\,(u_h(x_i)-u_h(x_j))v_h(x_j) \nonumber\\
&= \sum_{i>j}(1-\alpha_{ij}(u_h)) d_{ij}\,(u_h(x_j)-u_h(x_i))(v_h(x_i)-v_h(x_j))\,.
\end{align}
Then, since $d_{ij}=0$ for $j\not\in S_i$, $\tilde{d}_h(\cdot;\cdot,\cdot)$ can be rewritten as 
\begin{equation}\label{finalAFC}
\tilde{d}_h(u_h;u_h,v_h)
= \sum_{E\in \calE_h} (1-\alpha_{ij}(u_h)) |d_{ij}|h_E\,(\partial_{\bt}u_h,\partial_{\bt}v_h)_E\,,
\end{equation}
where we have adopted the convention that an edge $E\in\calE_h$ has endpoints $x_i$ and $x_j$,
and used that $\alpha_{ij}=1$ for edges included in the Dirichlet boundary.

Method \eqref{LSC} then appears as an algebraic flux-correction scheme, with a different definition of the
limiters. Indeed comparing \eqref{LSC} with \eqref{finalAFC} we get
the equivalent AFC scheme if we choose $\alpha_{ij}(u_h)$ such that
\[
(1-\alpha_{ij}(u_h)) |d_{ij}|h_E =  \gamma_0^{}\,  h_E^d \, \alpha_E^{}(w_h).
\]
The new definition of the limiters made it possible to write some convergence and existence
results, also present in \cite{BJK15b}, in a more precise way, and improve in some of them. In particular, the new limiters
make it possible to prove convergence for general meshes, as well as to prove uniqueness of
solutions and optimal convergence in the diffusion dominated regime.

\section{Numerical Results}\label{numerics}

In this section we present three sets of numerical results for bi-dimensional problems. The nonlinear system
\eqref{LSC} has been solved using the following fixed-point algorithm with damping: Starting with the Galerkin
solution $u_h^0$, then compute a sequence $\{ u_h^k\}$ defined by
\begin{equation}
u_h^{k+1}=u_h^k+\omega\,(\tilde{u}_h^{k+1}-u_h^k)\quad k=0,1,2,\ldots\,,
\end{equation}
where $\omega\in (0,1)$ is a damping parameter, and $\tilde{u}_h^{k+1}$ solves: $\tilde{u}_h^{k+1}-u_{bh}\in
\mathcal{V}_h^0$, and 
\begin{equation}\label{linearised}
a(\tilde{u}_h^{k+1},v_h)+d_h(u_h^k;\tilde{u}_h^{k+1},v_h)=(f,v_h)\qquad\forall\, v_h\in \mathcal{V}_h^0\,.
\end{equation}
In all our calculations we have used $\omega=0.1$, and stopped the iterations when the residual 
$\boldsymbol{R}^{k}:=\left(\tilde{a}(u_h^{k+1};\psi_i)-(f,\psi_i)_\Omega\right)_{i=1,\ldots,{\rm dim}(\mathcal{V}_h^0)}$ has an euclidean norm smaller than, or equal to, $10^{-8}$. 

\subsection{Convergence for a smooth solution}

We take $ \bb=(2,1)$, $\sigma=1$, and different values for $\varepsilon$. We have selected the right-hand-side 
and boundary  conditions in such a way that
the solution is given by $u(x,y)=\sin(2\pi x)\sin(2\pi y)$. The meshes used were the three-directional mesh (c)
and the non-Delaunay mesh~(d) in Figure \ref{fig-mesh}. In these calculations we have used $\gamma_0^{}=3$ and
$p=4$.

The results in Tables \ref{convdom-meshc}-\ref{diffdom-meshd} match the theoretical results. In particular we observe a first order convergence in the diffusion-dominated regime for the Mesh~(c), 
as predicted by Theorem \ref{theo-optimal}, and a second order convergence
in the $L^2$ norm of the error for both the convection and diffusion-dominated regimes. The latter is in accordance
with the empirical observations that linearity preservation implies such a convergence. For Mesh~(d),
which is non-symmetric, and hence the method is no longer linearity preserving, we can observe a first
order convergence in both regimes. This convergence is not affected by the non-Delaunay character of
the mesh. 

\begin{table}[htb]
\begin{center}
\caption{$\varepsilon=10^{-6}$, numerical results for Grid~(c).}
\label{convdom-meshc} 
\begin{tabular}{c|rrrrrr}
$l$ & $\|u-u_h\|_{0,\Omega}$ & ord. &   $|u-u_h|_{1,\Omega}$ & ord.& 
 $\|u-u_h\|_h$ & ord.\\ \hline
3 & 0.49391  & --      &  4.38896  & --       & 3.62380 & -- \\
4 & 0.47965  & 0.04  &  4.26871 &  0.04  &   3.08479 & 0.23  \\
5 & 0.19110  & 1.33  &  2.71665 &  0.65  &   1.08371  & 1.51\\
6 & 0.04080  & 2.23  &  1.55469 & 0.81  &     0.22671 &  2.26  \\
7 & 0.00683  & 2.58  &  0.64692 & 1.27  &  0.03904 &  2.54   \\
8 & 0.00119  & 2.52  &  0.27480 & 1.24  &  0.00689 & 2.50
\end{tabular}
\end{center}
\end{table}

\begin{table}[htb]
\begin{center}
\caption{$\varepsilon=1$, numerical results for Grid~(c).}
\label{diffdom-meshc} 
\begin{tabular}{c|rrrrrr}
$l$ & $\|u-u_h\|_{0,\Omega}$ & ord. &   $|u-u_h|_{1,\Omega}$ & ord.& 
 $\|u-u_h\|_h$ & ord.\\ \hline
3 & 0.38594   & --     &    3.48242  & -- &     5.44504  & -- \\
4 & 0.16557  & 1.22  &   1.90920  & 0.87  &   2.26966 &  1.26  \\
5 & 0.03268  & 2.34  &   0.89029  & 1.10  &   0.92785 &  1.29 \\
6 & 0.00612  & 2.42  &   0.43637  & 1.03  &   0.43912  & 1.08 \\
7 & 0.00141  & 2.12  &   0.21800  & 1.00  &   0.21818 &  1.01   \\
8 & 0.00035  & 2.02  &   0.10903  & 1.00   &  0.10904 &  1.00
\end{tabular}
\end{center}
\end{table}

\begin{table}[htb]
\begin{center}
\caption{$\varepsilon=10^{-6}$, numerical results for Grid~(d).}
\label{convdom-meshd} 
\begin{tabular}{c|rrrrrr}
$l$ & $\|u-u_h\|_{0,\Omega}$ & ord. &   $|u-u_h|_{1,\Omega}$ & ord.& 
 $\|u-u_h\|_h$ & ord.\\ \hline
3&  0.48754    & -- &   4.33607   & -- &    5.06989  & -- \\
4& 0.45680  & 0.09   & 4.11426  & 0.08    &2.93242  & 0.79\\
5& 0.17080  & 1.42   & 3.15455  & 0.38    & 1.05213  & 1.48 \\
6& 0.04330  & 1.98    & 2.23948  & 0.49   &  0.26065  & 2.01 \\
7 & 0.01165  & 1.89    & 1.72410  & 0.38   & 0.05482  & 2.25  \\
8 & 0.00474  & 1.30    & 1.63424  & 0.08    & 0.02087  & 1.39
\end{tabular}
\end{center}
\end{table}

\begin{table}[htb]
\begin{center}
\caption{$\varepsilon=1$, numerical results for Grid~(d).}
\label{diffdom-meshd} 
\begin{tabular}{c|rrrrrr}
$l$ & $\|u-u_h\|_{0,\Omega}$ & ord. &   $|u-u_h|_{1,\Omega}$ & ord.& 
 $\|u-u_h\|_h$ & ord.\\ \hline
3 & 0.38351 & -- &  3.52996  & -- & 5.57464 & -- \\
4 & 0.16616  & 1.21    & 2.00539  & 0.82    & 2.41681  & 1.21 \\
5 & 0.04513  & 1.88    & 0.98086  & 1.03    & 1.03172  & 1.23 \\
6 & 0.01277  & 1.82    & 0.48118  & 1.03    & 0.48720  & 1.08 \\
7 & 0.00423  & 1.59    & 0.23973  & 1.01    & 0.24059  & 1.02   \\
8 & 0.00163  & 1.38    & 0.11982  & 1.00    & 0.11998  & 1.00
\end{tabular}
\end{center}
\end{table}

\subsection{A problem with one inner layer, and a rotating convective field}
We use $\varepsilon =10^{-5}$, $f=0$, $\sigma=0$, $\bb=(-y,x)$, homogeneous Neumann boundary conditions
on exit, and 
\[
g(x,y)=\left\{ \begin{array}{cl} 1 & \textrm{if}\; x\le 0.5\,,\\
0 & \textrm{else}\,, \end{array}\right. \]
as Dirichlet condition at entry. We have solved this problem on a uniform refinement of the 
three-directional from Mesh~(c) in Figure \ref{fig-mesh}. 
The parameter $\gamma_0^{}$ has been set to $1$, and the results show no violation of the DMP.
The results for this case are depicted in Figure  \ref{Fig2}. We can observe that
the increase in the value of $p$ provides a solution whose inner layer is much sharper than the
choice $p=1$.

\begin{figure}[h!tb]
\begin{center}
\subfigure[]{\includegraphics[scale=0.35]{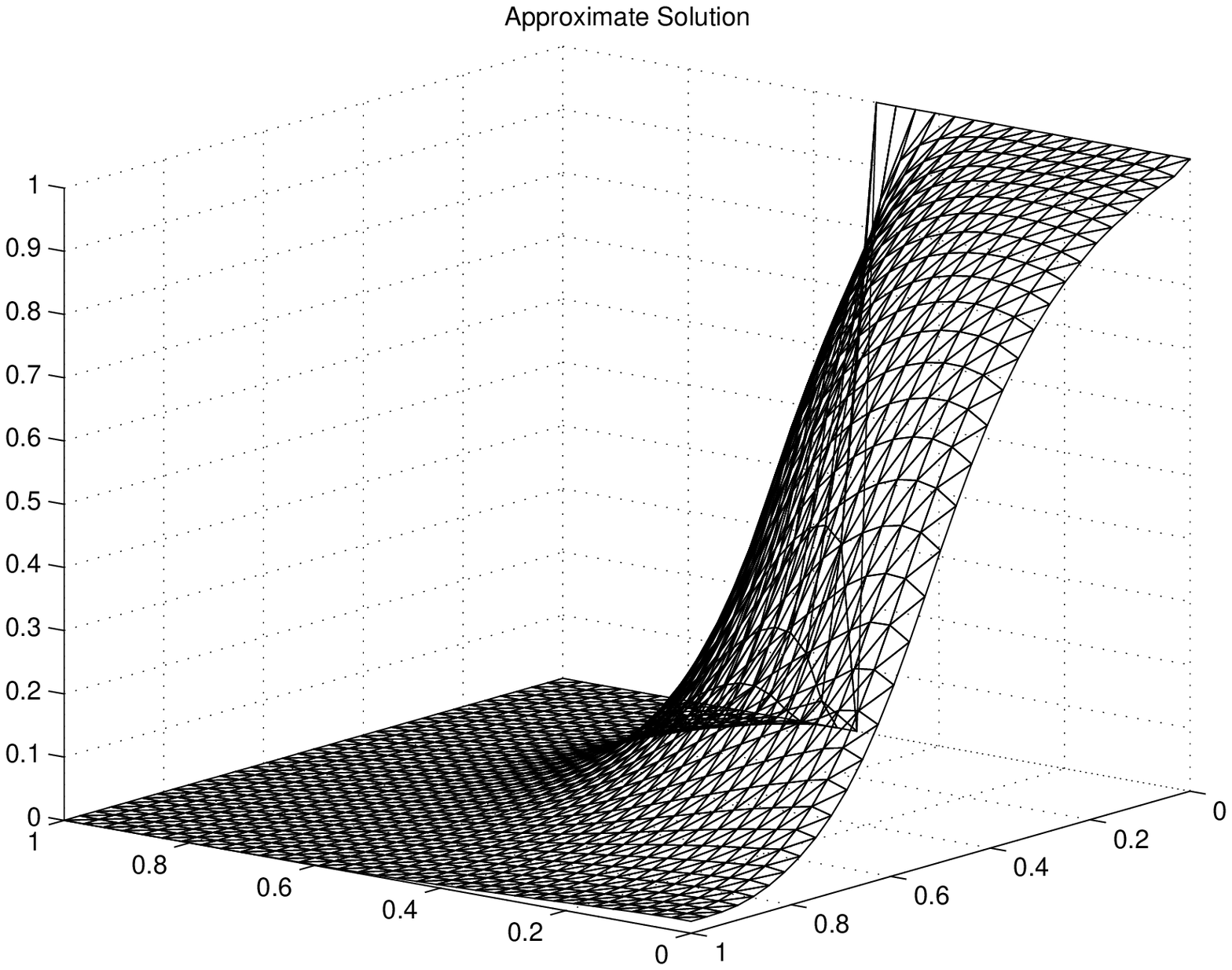}}
\subfigure[]{\includegraphics[scale=0.35]{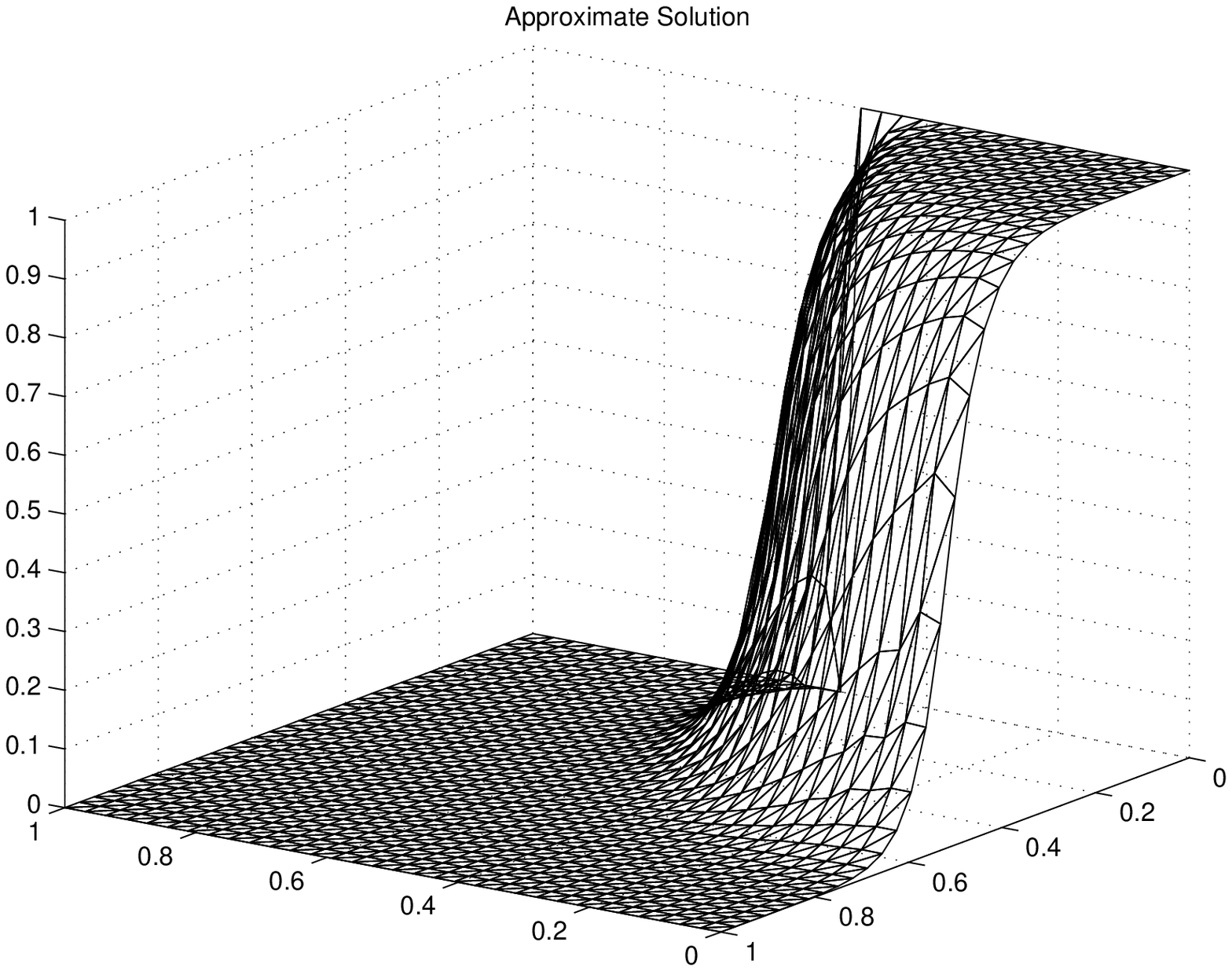}}
\end{center}
\caption{Discrete solution  for $p=1$ (left) and $p=4$ (right). }
\label{Fig2}
\end{figure}

\subsection{Advection skew to the mesh}
We use $\varepsilon =10^{-5}$, $f=0$, $\sigma=0$ $\bb=\left(\cos\left(\frac{\pi}{3}\right),\sin\left(\frac{\pi}{3}\right)\right)$, and 
\[
g(x,y)=\left\{ \begin{array}{cl} 1 & \textrm{if}\; x\,=\, 0 \; \textrm{or}\; y=1\,,\\
0 & \textrm{else}\,, \end{array}\right. \]
as Dirichlet condition. We have solved this problem on a criss-cross mesh as shown in Mesh~(a)
in Figure \ref{fig-mesh}. We have used the parameter $\gamma_0^{}=0.75$, and, again, no violations
of the DMP have been observed.
The results are depicted in Figure \ref{Fig3}, where we can observe  much 
sharper layers (especially the internal one) when $p=4$ is used.

\begin{figure}[h!tb]
\begin{center}
\subfigure[]{\includegraphics[scale=0.35]{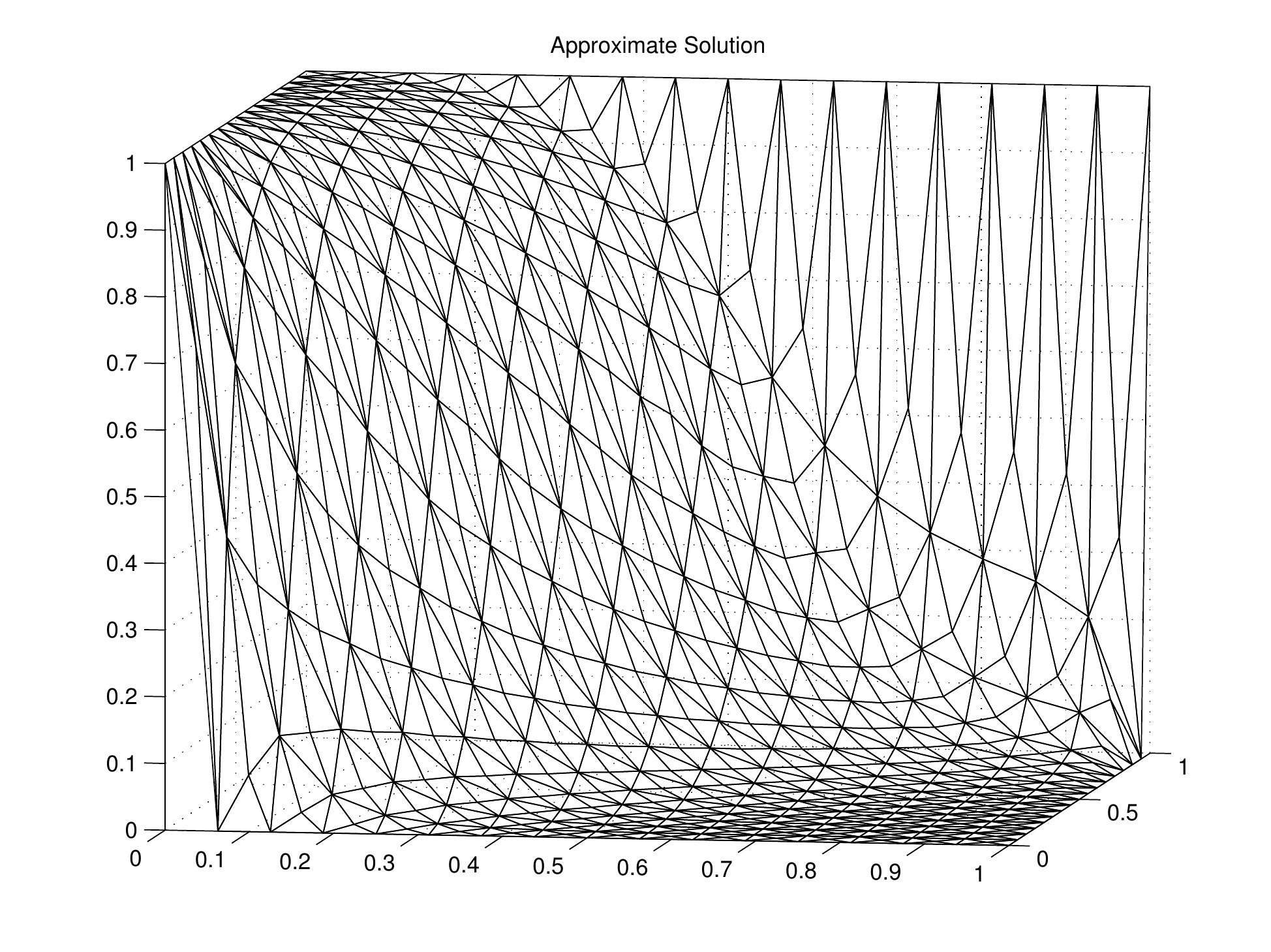}}
\subfigure[]{\includegraphics[scale=0.35]{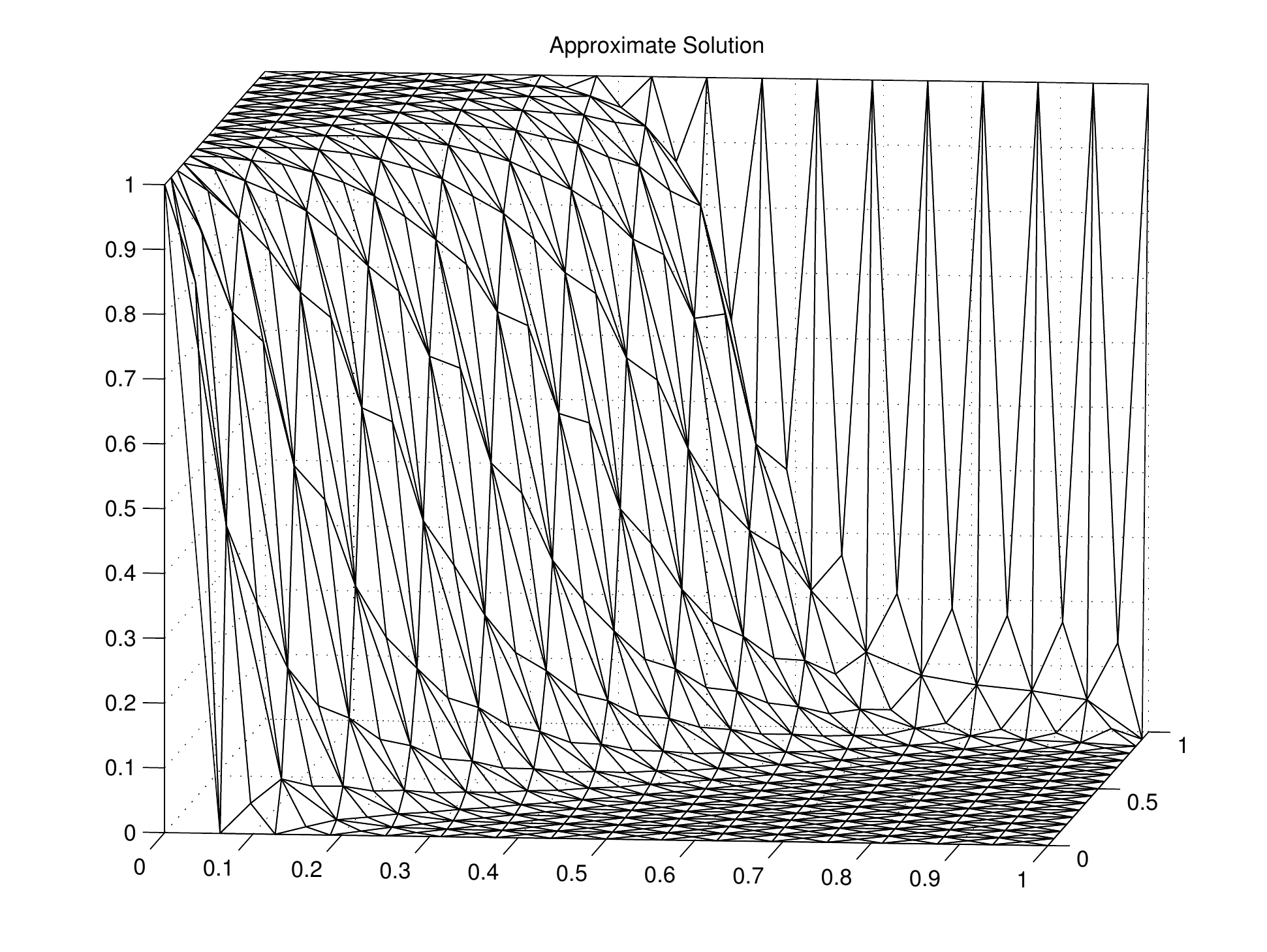}}
\end{center}
\caption{Discrete solution  for $p=1$ (left) and $p=4$ (right). }
\label{Fig3}
\end{figure}

\section*{Acknowledgements} The work of GRB and FK has been partially funded by the Leverhulme Trust
via the Research Project Grant No. RPG-2012-483. The authors would like to thank Volker John and Petr Knobloch  
for very helpful discussions.

\bibliographystyle{spmpsci} 
\bibliography{BBK}

\end{document}